%% LyX 2.1.4 created this file.  For more info, see http://www.lyx.org/.
%% Do not edit unless you really know what you are doing.
\documentclass[oneside,english,9pt]{amsart}
\usepackage[T1]{fontenc}
\usepackage[latin9]{inputenc}
\usepackage{geometry}
\geometry{verbose,tmargin=2cm,bmargin=2cm,lmargin=2.5cm,rmargin=2.5cm}
\pagestyle{plain}
\usepackage{amstext}
\usepackage{amsthm}
\usepackage{amssymb}

\makeatletter
%%%%%%%%%%%%%%%%%%%%%%%%%%%%%% Textclass specific LaTeX commands.
\numberwithin{equation}{section}
\numberwithin{figure}{section}
\theoremstyle{plain}
\newtheorem{thm}{\protect\theoremname}
  \theoremstyle{definition}
  \newtheorem{defn}[thm]{\protect\definitionname}
  \theoremstyle{remark}
  \newtheorem{rem}[thm]{\protect\remarkname}
  \theoremstyle{plain}
  \newtheorem{prop}[thm]{\protect\propositionname}
  \theoremstyle{plain}
  \newtheorem{lem}[thm]{\protect\lemmaname}
  \theoremstyle{plain}
  \newtheorem{cor}[thm]{\protect\corollaryname}
  \theoremstyle{remark}
  \newtheorem*{claim*}{\protect\claimname}

%%%%%%%%%%%%%%%%%%%%%%%%%%%%%% User specified LaTeX commands.
\usepackage{xypic}

\usepackage{babel}
\providecommand{\claimname}{Claim}
  \providecommand{\corollaryname}{Corollary}
  \providecommand{\definitionname}{Definition}
  \providecommand{\lemmaname}{Lemma}
  \providecommand{\propositionname}{Proposition}
  \providecommand{\remarkname}{Remark}
\providecommand{\theoremname}{Theorem}

\pagestyle{plain}

\title{Abstract key polynomials and comparison theorems with the key polynomials
of Mac Lane -- Vaqui\'e}

\g@addto@macro{\endabstract}{\@setabstract}
\newcommand{\authorfootnotes}{\renewcommand\thefootnote{\@fnsymbol\c@footnote}}%

\makeatother

\begin{document}
\maketitle

\begin{center}
% More than one Author with different Affiliations \par \bigskip
\par\end{center}

\begin{center}
\authorfootnotes {\LARGE{}J. Decaup\textsuperscript{{\LARGE{}1}},
W. Mahboub\textsuperscript{{\LARGE{}2}}, M. Spivakovsky\textsuperscript{{\LARGE{}1}} }
\par\end{center}{\LARGE \par}

\begin{center}
{\LARGE{}\bigskip{}
}
\par\end{center}{\LARGE \par}

\begin{center}
{\LARGE{}\textsuperscript{{\LARGE{}1}} Institut de Math\'ematiques
de Toulouse/CNRS UMR 5219, Universit\'e Paul Sabatier, 118, route de
Narbonne, 31062 Toulouse cedex 9, France. }
\par\end{center}{\LARGE \par}

\begin{center}
{\LARGE{}\textsuperscript{{\LARGE{}2}}Department of Mathematics,
American University of Beirut.}
\par\end{center}{\LARGE \par}

\begin{center}
{\LARGE{}\bigskip{}
}
\par\end{center}{\LARGE \par}

{\LARGE{}%\title{Abstract key polynomials and comparison theorems with the key polynomials%of Mac Lane -- Vaqui\'e}}{\LARGE \par}

\section*{Introduction}

{\LARGE{}Let $K$ be a valued field and $K\hookrightarrow K(x)$ a
simple purely transcendental extension of $K$. In the nineteen thirties,
S. Mac Lane considered the special case when the valuation $\nu$
of $K$ is disrete of rank one and defined the notion of }\textbf{\LARGE{}key
polynomials}{\LARGE{}, associated to various extensions of $\nu$
to $K(x)$ (\cite{ML1} and \cite{ML}). Key polynomials are elements
of $K[x]$ which describe the structure of various extensions of $\mu$
to $K(x)$ and the relationship between them. Roughly speaking, they
measure how far a given extension of $\mu$ to $K(x)$ is from the
monomial valuation (the one that assigns to each polynomial $f\in K[x]$
the minimal value of the monomials appearing in $f$). Mac Lane's definition
of key polynomials was axiomatic: an element $f\in K[x]$ is a key
polynomial for an extension $\mu$ of $\nu$ to $K$ if it is monic,
$\mu$-minimal and $\mu$-irreducible (see Section \ref{KeyP} below
for precise definitions).}{\LARGE \par}

{\LARGE{}Michel Vaqui\'e (\cite{V0}, \cite{V}, \cite{V1} and \cite{V2})
extended this definition to the case of arbitrary valued fields $K$
(that is, without the assumption that $\nu$ is discrete). One important
difference with the case of discrete valuations treated by Mac Lane
is the presence of limit key polynomials (which will not be discussed
in the present paper).}{\LARGE \par}

{\LARGE{}F. H. Herrera, M. A. Olalla, W. Mahboub and M. Spivakovsky
defined a different, though closely related notion of key polynomials
(\cite{HOS} and \cite{HMOS}). In their approach the emphasis was
on describing key polynomials by explicit formulae and on constructing
the successive key polynomials recursively in terms of the preceding
ones.}{\LARGE \par}

{\LARGE{}In his Ph.D. thesis (Toulouse 2013), W. Mahboub proved comparison
theorems between Mac Lane -- Vaqui\'e key polynomials.}{\LARGE \par}

{\LARGE{}Apart from a better understanding of the structure of simple
extensions of valued fields in its own right, one of the intended
applications of the theory of key polynomials is the work towards
the proof of the Local Uniformization Theorem over fields of arbitrary
characteristic. Jean-Christophe San Saturnino (see Theorem 6.5 of
\cite{JCSS}) proved that in order to achieve Local Uniformization
of a variety embedded in $Spec\ k[u_{1},\dots,u_{n}]$ along a given
valuation $\mu$ of $k(u_{1},\dots,u_{n})$ it is sufficient to monomialize
the first limit key polynomial of the simple extension $k(u_{1},\dots,u_{n-1})\hookrightarrow k(u_{1},\dots,u_{n})$
(assuming local uniformization is already known in ambient dimension
at most $n-1$). Although limit key polynomials are beyond the scope
of this paper, we hope that the comparison theorems proved here will
clarify the relationship between different definitions of key polynomials
and therefore be useful for applications.}{\LARGE \par}

{\LARGE{}Let $\mu$ be an extension of $\nu$ to $K$. In this paper
we give a new definition of key polynomials (which we call }\textbf{\LARGE{}abstract
key polynomials}{\LARGE{}) associated to $\mu$ and study the relationship
between them and key polynomials of Mac Lane -- Vaqui\'e. Associated
to each abstract key polynomial $Q$, we define the }\textbf{\LARGE{}truncation}{\LARGE{}
$\mu_{Q}$ of $\mu$ with respect to $Q$. Roughly speaking, $\mu_{Q}$
is an approximation to $\mu$ defined by $Q$. This approximation
gets better as $\deg_{x}Q$ and $\mu(Q)$ increase. We also define
the notion of an abstract key polynomial $Q'$ being an }\textbf{\LARGE{}immediate
successor}{\LARGE{} of another abstract key polynomial $Q$ (in this
situation we write $Q<Q'$). The main comparison results proved in
this paper are as follows:}{\LARGE \par}

{\LARGE{}Theorem \ref{abstractimpliesVaquie}: An abstract key polynomial
for $\mu$ is a Mac Lane -- Vaqui\'e key polynomial for the truncated
valuation $\mu_{Q}$.}{\LARGE \par}

{\LARGE{}Theorem \ref{successorimpliesVaquie}: If $Q<Q'$ are two
abstract key polynomials for $\mu$ then $Q'$ is a Mac Lane -- Vaqui\'e
key polynomial for $\mu_{Q}$.}{\LARGE \par}

{\LARGE{}Theorem \ref{vaquieimpliesabstract} which, for a monic polynomial
$Q\in K[x]$ and a valuation $\mu'$ of $K(x)$, gives a sufficient
condition for $Q$ to be an abstract key polynomial for $\mu'$. Combined
with $\mathrm{Proposition\ 1.3}$ of \cite{V}, this describes a class
of pairs of valuations $(\mu,\mu')$ such that $Q$ is a Mac Lane
-- Vaqui\'e key polynomial for $\mu$ and an abstract key polynomial
for $\mu'$. This can be regarded as a partial converse to Theorem
\ref{successorimpliesVaquie}.}{\LARGE \par}

{\LARGE{}This paper is structured as follows. In \S\ref{KeyP} we
define the Mac Lane -- Vaqui\'e and the abstract key polynomials and
study their properties. In \S\ref{relationship} we prove our main
comparison results stated above.}{\LARGE \par}

\section{Preliminaries and notation}

{\LARGE{}Throughout this paper, $\mathbb{N}$ will denote the non-negative
integers, $\mathbb{N}^{*}$ the strictly positive integers. For a
field $L$, the notation $L^{*}$ will stand for the multiplicative
group $L\setminus\{0\}$. }{\LARGE \par}
\begin{itemize}
\item {\LARGE{}Let $R$ be a domain, $K$ the field of fractions of $R$,
$\mu$ a valuation of $K$ with value group $\Gamma$ and $\alpha\in\Gamma$.
We define: }{\LARGE \par}\end{itemize}
\begin{enumerate}
\item {\LARGE{}$\mathcal{P_{\alpha}}(R):=\left\{ x\in R\text{ such that }\mu(x)\geq\alpha\right\} $ }{\LARGE \par}
\item {\LARGE{}$\mathcal{P}_{\alpha^{+}}(R):=\left\{ x\in R\text{ such that }\mu(x)>\alpha\right\} $ }{\LARGE \par}
\item {\LARGE{}$\mathrm{gr_{\mu}(R):=\bigoplus\limits _{\alpha\in\Gamma}\frac{\mathcal{P}_{\alpha}(R)}{\mathcal{P}_{\alpha^{+}}(R)}}$ }{\LARGE \par}
\item {\LARGE{}$G_{\mu}:=\mathrm{gr}_{\mu}(K)$ }{\LARGE \par}
\item {\LARGE{}For each $f\in R$ such that $\mu(f)=\alpha$, we denote
by $\mathrm{in}_{\mu}(f)$ the image of $f$ in $\frac{\mathcal{P}_{\alpha}(R)}{\mathcal{P}_{\alpha^{+}}(R)}$;
we call this image the }\textbf{\LARGE{}initial form}{\LARGE{} of
$f$ with respect to $R$ and $\mu$. }{\LARGE \par}\end{enumerate}
\begin{itemize}
\item {\LARGE{}Let $K\hookrightarrow K(x)$ be a purely transcendental extension
of $K$. Let $Q$ be a monic polynomial in $K[x]$. Every polynomial
$g\in K[x]$ can be written in a unique way as 
\begin{equation}
g=\sum\limits _{j=0}^{s}g_{j}Q^{j},\label{eq:Qexpansion}
\end{equation}
with all the $g_{j}\in K[x]$ of degree strictly less than $\deg(Q)$.
We call (\ref{eq:Qexpansion}) the $Q$}\textbf{\LARGE{}-expansion}{\LARGE{}
of $g$. }{\LARGE \par}\end{itemize}
\begin{defn}
{\LARGE{}Let $g=\sum\limits _{j=0}^{s}g_{j}Q^{j}$ be the $Q$-expansion
of an element $g\in K[x]$. We put $\mu_{Q}(g):=\min\limits _{\underset{g_{j}\neq0}{0\leq j\leq s}}\mu(g_{j}Q^{j})$
and we call $\mu_{Q}$ the }\textbf{\LARGE{}truncation}{\LARGE{} of
$\mu$ with respect to $Q$. }{\LARGE \par}\end{defn}
\begin{itemize}
\item {\LARGE{}Let $\mu$ be a valuation of the field $K(x)$, where $x$
is an algebraically independent element over a field $K$. Consider
the restriction of $\mu$ to $K[x]$. Consider a monic polynomial
$Q\in K[x]$. Assume that $\mu_{Q}$ is a valuation (below we will
define the notion of abstract key polynomial and will show that $\mu_{Q}$
is always a valuation in that case). Fix another polynomial $f$ and
let $f=\sum\limits _{j=0}^{s}f_{j}Q^{j}\in K[x]$ be the $Q$-expansion
of $f$. Let $\alpha=\mu_{Q}(f)$. We denote by $\mathrm{In}_{Q}f$
the element $\sum\limits _{\mu(f_{j}Q^{j})=\alpha}f_{j}Q^{j}\in K[x]$.
Note that, by definition, $\mathrm{In}_{Q}f\in K[x]$, while $\mathrm{in}_{\mu_{Q}}f\in\mathrm{gr}_{\mu_{Q}}K[x]$.
We have $\mathrm{in}_{\mu_{Q}}\left(\mathrm{In}_{Q}f\right)=\mathrm{in}_{\mu_{Q}}f$. }{\LARGE \par}
\end{itemize}

\section{Key Polynomials}

{\LARGE{}\label{KeyP}}{\LARGE \par}

\subsection{Key polynomials of Mac Lane--Vaqui\'e\label{VML}}

{\LARGE{}We first recall the notion of key polynomial, introduced
by Vaqui\'e in \cite{V}, generalizing an earlier construction of Mac
Lane \cite{ML}. }{\LARGE \par}
\begin{defn}
{\LARGE{}Let $\left(f,g\right)\in K[x]^{2}$. We say that $f$ and
$g$ are $\mu$-equivalent and we write $f\sim_{\mu}g$ if $f$ and
$g$ have the same initial form with respect to $K$ and $\mu$. }{\LARGE \par}\end{defn}
\begin{rem}
{\LARGE{}The polynomials $f$ and $g$ are $\mu$-equivalent if and
only if \label{equivin}
$$
\mu(f-g)>\mu(f)=\mu(g).
$$
Indeed, if 
\begin{equation}
\mathrm{in}_{\mu}f=\mathrm{in}_{\mu}g\label{eq:initialequal}
\end{equation}
then, in particular, $\mu(f)=\mu(g)$. Furthermore, (\ref{eq:initialequal})
says that $f$ and $g$ agree modulo $\mathcal{P}_{\mu(f)^{+}}$.
Thus 
\[
\mu(f-g)>\mu(f)=\mu(g).
\]
Conversely, if $\mu(f-g)>\mu(g)$, then $\mathrm{in}_{\mu}(f-g+g)=\mathrm{in}_{\mu}(g)$. }{\LARGE \par}\end{rem}
\begin{defn}
{\LARGE{}Let $\left(f,g\right)\in K[x]^{2}$. We say that $g$ is
$\mu$-divisible by $f$ or that $f$ $\mu$-divides $g$ (denoted
by $f\ |_{\mu}\ g$) if the initial form of $g$ with respect to $\mu$
is divisible by the initial form of $f$ with respect to $\mu$ in
$\mathrm{gr}_{\mu}K[x]$. }{\LARGE \par}\end{defn}
\begin{rem}
{\LARGE{}We have $f\ |_{\mu}\ g$ if and only if there exists $c\in K[x]$
such that $g\sim_{\mu}fc$. }{\LARGE \par}\end{rem}
\begin{defn}
{\LARGE{}Let $Q\in K[x]$ be a monic polynomial. We say that $Q$
is a Mac Lane--Vaqui\'e key polynomial for the valuation $\mu$ if the
following conditions hold: }{\LARGE \par}\end{defn}
\begin{enumerate}
\item {\LARGE{}$Q$ is $\mu$-irreducible, that is, for any $g,h\in K[x]$,
if $Q\ |_{\mu}\ gh$, then $Q\ |_{\mu}\ g$ or $Q\ |_{\mu}\ h$. }{\LARGE \par}
\item {\LARGE{}$Q$ is $\mu$-minimal, that is, for every $f\in K[x]$,
if $Q\ |_{\mu}\ f$ then $\deg(f)\geq\deg(Q)$. }{\LARGE \par}\end{enumerate}
\begin{prop}
{\LARGE{}Let $P$ be an element of $K[x]$. Assume that $P$ is $\mu$-irreducible.
Then $\mathrm{in}_{\mu}P$ is irreducible in $\mathrm{gr}_{\mu}K[x]$. }{\LARGE \par}\end{prop}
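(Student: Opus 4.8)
The plan is to pass back and forth between divisibility in the graded ring $\mathrm{gr}_{\mu}K[x]$ and $\mu$-divisibility of polynomials, invoke the $\mu$-irreducibility hypothesis in the polynomial language, and then finish by a cancellation. The one structural fact I would use repeatedly is that $\mathrm{gr}_{\mu}K[x]$ is a \emph{graded domain}: since $\mu$ is a valuation, $\mathrm{in}_{\mu}(fg)=\mathrm{in}_{\mu}(f)\,\mathrm{in}_{\mu}(g)$ for all $f,g\in K[x]$, so $\mathrm{gr}_{\mu}K[x]$ has no homogeneous zero-divisors; equivalently, it is a subring of $\mathrm{gr}_{\mu}K(x)$, in which every nonzero homogeneous element is a unit. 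Hence $\mathrm{gr}_{\mu}K[x]$ is a domain.

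Suppose then $\mathrm{in}_{\mu}P=\phi\psi$ with $\phi,\psi\in\mathrm{gr}_{\mu}K[x]$; I must show that one of $\phi,\psi$ is a unit. Since $P$ is monic, $\mathrm{in}_{\mu}P\neq 0$ and is homogeneous, of degree $\mu(P)$. Comparing the lowest and the highest homogeneous components of $\phi\psi$ — which are the products of the corresponding components of $\phi$ and of $\psi$, and are nonzero by the domain property — forces $\phi$ and $\psi$ each to be supported in a single degree, i.e.\ homogeneous. Moreover, every nonzero homogeneous element of $\mathrm{gr}_{\mu}K[x]$ of degree $\beta$ is of the form $\mathrm{in}_{\mu}g$ for some $g\in K[x]$ with $\mu(g)=\beta$: lift it to a representative in $\mathcal{P}_{\beta}(K[x])$, which cannot lie in $\mathcal{P}_{\beta^{+}}(K[x])$ because the element is nonzero. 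So write $\phi=\mathrm{in}_{\mu}g$ and $\psi=\mathrm{in}_{\mu}h$ with $g,h\in K[x]$.

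Now $\mathrm{in}_{\mu}(gh)=\mathrm{in}_{\mu}g\cdot\mathrm{in}_{\mu}h=\mathrm{in}_{\mu}P$, so in particular $\mathrm{in}_{\mu}P$ divides $\mathrm{in}_{\mu}(gh)$ in $\mathrm{gr}_{\mu}K[x]$, that is, $P\mid_{\mu}gh$. By $\mu$-irreducibility of $P$ we may assume, after renaming, that $P\mid_{\mu}g$; equivalently $\mathrm{in}_{\mu}P$ divides $\mathrm{in}_{\mu}g=\phi$, say $\phi=\mathrm{in}_{\mu}P\cdot\theta$ with $\theta\in\mathrm{gr}_{\mu}K[x]$. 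Substituting into $\mathrm{in}_{\mu}P=\phi\psi$ gives $\mathrm{in}_{\mu}P=\mathrm{in}_{\mu}P\cdot\theta\psi$, and cancelling the nonzero element $\mathrm{in}_{\mu}P$ in the domain $\mathrm{gr}_{\mu}K[x]$ yields $\theta\psi=1$, so $\psi$ is a unit. Thus $\mathrm{in}_{\mu}P$ has no factorization into two non-units. (That $\mathrm{in}_{\mu}P$ is itself a non-unit is the remaining point; it is either built into the notion of $\mu$-irreducibility being used here or follows from $\mu$-minimality when $P$ has positive degree, since a unit initial form would $\mu$-divide $1$ and hence force $\deg P\leq 0$.)

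I do not expect a genuine obstacle: the argument is formal once the framework is set up. The only steps that need care are the reduction to homogeneous factors and the identification of homogeneous elements of $\mathrm{gr}_{\mu}K[x]$ with initial forms of polynomials — and both hinge on $\mathrm{gr}_{\mu}K[x]$ being a graded domain, which is exactly where the valuation axiom $\mu(fg)=\mu(f)+\mu(g)$ (rather than mere subadditivity) enters. After that it is just bookkeeping with divisibility plus one cancellation.
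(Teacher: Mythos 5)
Your proof is correct and takes essentially the same route as the paper's: translate a factorization of $\mathrm{in}_{\mu}P$ into a statement $P\ |_{\mu}\ gh$, invoke the $\mu$-irreducibility of $P$, and conclude via cancellation in the graded domain $\mathrm{gr}_{\mu}K[x]$. You merely make explicit the steps the paper's two-line argument leaves implicit (the reduction to homogeneous factors and the cancellation showing the co-factor is a unit), which is a useful, not a divergent, elaboration.
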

\begin{proof}
{\LARGE{}Assume that $\mathrm{in}_{\mu}P$ is reducible in $\mathrm{gr}_{\mu}K[x]$,
aiming for contradiction. Write $\mathrm{in}_{\mu}P=\mathrm{in}_{\mu}g\text{ }\mathrm{in}_{\mu}h$
with $\mu(g),\mu(h)>0$.}{\LARGE \par}

{\LARGE{}We have $P\ |_{\mu}\ gh$, but $P\ \nmid_{\mu}\ g$ and $P\ \nmid_{\mu}\ h$.
This contradicts the $\mu$-irreducibility of $P$. The Proposition
is proved. }{\LARGE \par}\end{proof}
\begin{rem}
{\LARGE{}\label{UFD} Assume that every homogeneous element of $\mathrm{gr}_{\mu}K[x]$
admits a unique decomposition into irreducible factors. Then $Q$
is $\mu$-irreducible if and only if its initial form with respect
to $\mu$ is irreducible. }{\LARGE \par}
\end{rem}
{\LARGE{}%In his manuscript \cite{S}, Spivakovsky gives a new definition of%key polynomials, which we are going to discuss
%now.
We now introduce an alternative, though closely related notion of key polynomials.}{\LARGE \par}

\subsection{Abstract key polynomials }

{\LARGE{}We keep the same notation as in \ref{VML}, and we add the
following: }{\LARGE \par}
\begin{enumerate}
\item {\LARGE{}For each strictly positive integer $b$, we write $\partial_{b}:=\frac{\partial^{b}}{b!\partial x^{b}}$,
the so-called $b$-th }\textbf{\LARGE{}formal derivative}{\LARGE{}
with respect to $x$. }{\LARGE \par}
\item {\LARGE{}For each polynomial $P\in K[x]$, let $\epsilon_{\mu}(P):=\max\limits _{b\in\mathbb{N}^{\ast}}\left\{ \frac{\mu(P)-\mu(\partial_{b}P)}{b}\right\} $ }{\LARGE \par}
\item {\LARGE{}For each polynomial $P\in K[x]$, let $b(P):=\min I(P)$
where 
\[
I(P):=\left\{ b\in\mathbb{N^{\ast}}\text{ such that }\frac{\mu(P)-\mu(\partial_{b}P)}{b}=\epsilon_{\mu}(P)\right\} .
\]
}{\LARGE \par}\end{enumerate}
\begin{defn}
{\LARGE{}Let $Q$ be a monic polynomial in $K[x]$. We say that $Q$
is an }\textbf{\LARGE{}abstract key polynomial}{\LARGE{} for $\mu$
if for each polynomial $f$ satisfying
$$
\epsilon_{\mu}(f)\geq\epsilon_{\mu}(Q),
$$
we have $\deg(f)\geq\deg(Q)$. }{\LARGE \par}\end{defn}
\begin{prop}
{\LARGE{}\label{rem:prod} Let $t\geq2$ be an integer, $Q$ an abstract
key polynomial and $P_{1},\ldots,P_{t}\in K[x]$ of degrees strictly
less than $\deg(Q)$. Let $\prod\limits _{i=1}^{t}P_{i}=qQ+r$ be
the Euclidean division of $\prod\limits _{i=1}^{t}P_{i}$ by $Q$.
Then 
\[
\mu(r)=\mu\left(\prod\limits _{i=1}^{t}P_{i}\right)<\mu(qQ).
\]
}{\LARGE \par}\end{prop}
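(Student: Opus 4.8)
The plan is to induct on $t$; the case $t=2$ carries all the content. Suppose first $t=2$. If $\deg(P_1P_2)<\deg(Q)$, then $q=0$, $r=P_1P_2$, and the assertion is trivial since $\mu(qQ)=+\infty$. Otherwise $q\neq0$, and comparing degrees in $P_1P_2-r=qQ$ (note $\deg r<\deg Q\le\deg(P_1P_2)$) gives $\deg(q)=\deg(P_1)+\deg(P_2)-\deg(Q)<\deg(Q)$. Hence each of $q,r,P_1,P_2$ has degree strictly less than $\deg(Q)$, so the contrapositive of the defining property of an abstract key polynomial yields $\epsilon_\mu(q),\epsilon_\mu(r),\epsilon_\mu(P_1),\epsilon_\mu(P_2)<\epsilon_\mu(Q)$.

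I will use three facts about $\epsilon_\mu$, each a short consequence of the Leibniz rule $\partial_b(fg)=\sum_{i+j=b}\partial_if\,\partial_jg$ ($\partial_0$ being the identity) together with $\mu(\partial_bh)\ge\mu(h)-b\,\epsilon_\mu(h)$ for $b\ge1$, an equality for $b=b(h)$: (a) $\epsilon_\mu(fg)\le\max\bigl(\epsilon_\mu(f),\epsilon_\mu(g)\bigr)$; (b) if $\mu(f-g)=\min\bigl(\mu(f),\mu(g)\bigr)$ then $\epsilon_\mu(f-g)\le\max\bigl(\epsilon_\mu(f),\epsilon_\mu(g)\bigr)$; (c) if $\epsilon_\mu(f)<\epsilon_\mu(g)$ then $\epsilon_\mu(fg)\ge\epsilon_\mu(g)$. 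Facts (a) and (b) follow by bounding every summand of $\partial_b(fg)$, respectively of $\partial_b(f-g)=\partial_bf-\partial_bg$, from below. For (c), evaluate $\partial_{b(g)}(fg)$: the summand $f\,\partial_{b(g)}g$ has value exactly $\mu(fg)-b(g)\,\epsilon_\mu(g)$, and every other summand $\partial_if\,\partial_jg$ with $i\ge1$, $i+j=b(g)$, has strictly larger value because $\epsilon_\mu(f)<\epsilon_\mu(g)$; hence it dominates and $\epsilon_\mu(fg)\ge\frac{\mu(fg)-\mu(\partial_{b(g)}(fg))}{b(g)}=\epsilon_\mu(g)$. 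I expect (c) to be the crux: the point is that multiplying by $Q$ cannot pull $\epsilon_\mu$ below $\epsilon_\mu(Q)$.

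To finish the case $t=2$: by (c) applied to $q$ and $Q$ we get $\epsilon_\mu(qQ)\ge\epsilon_\mu(Q)$, while by (a) and the first paragraph $\epsilon_\mu(P_1P_2)\le\max(\epsilon_\mu(P_1),\epsilon_\mu(P_2))<\epsilon_\mu(Q)$ and $\epsilon_\mu(r)<\epsilon_\mu(Q)$. Now look at $qQ=P_1P_2-r$. If there were no $\mu$-cancellation in this difference, i.e. $\mu(qQ)=\min(\mu(P_1P_2),\mu(r))$, then (b) would force $\epsilon_\mu(qQ)\le\max(\epsilon_\mu(P_1P_2),\epsilon_\mu(r))<\epsilon_\mu(Q)$, a contradiction. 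Therefore $\mu(P_1P_2)=\mu(r)$ and $\mathrm{in}_\mu(P_1P_2)=\mathrm{in}_\mu(r)$, i.e. $P_1P_2\sim_\mu r$; equivalently $\mu(qQ)=\mu(P_1P_2-r)>\mu(P_1P_2)=\mu(r)$, which is exactly the assertion.

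For $t\ge3$, assume the statement for $t-1$. Write the Euclidean division $\prod_{i=1}^{t-1}P_i=q_1Q+r_1$; by the inductive hypothesis $\mu(r_1)=\mu\bigl(\prod_{i<t}P_i\bigr)<\mu(q_1Q)$, and in particular $r_1\neq0$. Then $\prod_{i=1}^{t}P_i=q_1P_t\,Q+r_1P_t$, and the case $t=2$ applied to $r_1$ and $P_t$ gives $r_1P_t=q_2Q+r$ with $\deg r<\deg Q$ and $\mu(r)=\mu(r_1P_t)<\mu(q_2Q)$. By uniqueness of Euclidean division, $r$ is the remainder of $\prod_{i\le t}P_i$ by $Q$ and $q=q_1P_t+q_2$. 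Hence $\mu(r)=\mu(r_1P_t)=\mu(r_1)+\mu(P_t)=\mu\bigl(\prod_{i<t}P_i\bigr)+\mu(P_t)=\mu\bigl(\prod_{i\le t}P_i\bigr)$, while $\mu(q_1P_tQ)=\mu(q_1Q)+\mu(P_t)>\mu\bigl(\prod_{i\le t}P_i\bigr)$ and $\mu(q_2Q)>\mu(r_1P_t)=\mu\bigl(\prod_{i\le t}P_i\bigr)$, so from $qQ=q_1P_tQ+q_2Q$ we get $\mu(qQ)\ge\min(\mu(q_1P_tQ),\mu(q_2Q))>\mu\bigl(\prod_{i\le t}P_i\bigr)$, completing the induction.
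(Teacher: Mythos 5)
Your proof is correct and follows essentially the same route as the paper's: the crux in both arguments is that $\partial_{b(Q)}(qQ)$ drops in value by exactly $b(Q)\epsilon_{\mu}(Q)$ (the term $q\,\partial_{b(Q)}Q$ dominates the Leibniz expansion), whereas the derivatives of $P_{1}P_{2}$ and of $r$ drop by strictly less, which is incompatible with $qQ=P_{1}P_{2}-r$ unless $\mu(P_{1}P_{2})=\mu(r)<\mu(qQ)$ --- you have merely repackaged this single computation into the reusable inequalities (a)--(c) for $\epsilon_{\mu}$ and phrased the contradiction as a cancellation dichotomy. The induction step for $t\ge 3$ is identical to the paper's.
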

\begin{proof}
{\LARGE{}We proceed by induction on $t$.}{\LARGE \par}

{\LARGE{}First, consider the case $t=2$. We want to show that
$$
\mu(P_{1}P_{2})=\mu(r)<\mu(qQ).
$$
Assume the contrary, that is, $\mu(P_{1}P_{2})\geq\mu(qQ)$ and $\mu(r)\geq\mu(qQ)$.
For each $j\in\mathbb{N}^{*}$, we have $\mu(\partial_{j}P_{1})>\mu(P_{1})-j\epsilon_{\mu}(Q)$,
and similarly for $P_{2},q,r$, because all these polynomials have
degree strictly less than $\deg(Q)$ and $Q$ is an abstract key polynomial.
Since $\mu(\partial_{j}q)>\mu(q)-j\epsilon_{\mu}(Q)$ for all strictly
positive integers $j$, we deduce that 
\[
\mu(q\partial_{b(Q)}Q)=\mu(q)+\mu(Q)-b(Q)\epsilon_{\mu}(Q)<\mu(\partial_{b(Q)-j}Q)+\mu(\partial_{j}q)
\]
for all $j\in\{1,\dots,b(Q)\}$.\\
Hence $\mu\left(\partial_{b(Q)}(qQ)\right)=\mu\left(\sum\limits _{j=0}^{b(Q)}\left(\partial_{b(Q)-j}q\partial_{j}Q\right)\right)=\mu(q\partial_{b(Q)}Q)=\mu(qQ)-b(Q)\epsilon_{\mu}(Q)$.}{\LARGE \par}

{\LARGE{}On the other hand,}{\LARGE \par}

{\LARGE{}$\begin{array}{ccc}
\mu(\partial_{b(Q)}(qQ)) & = & \mu(\partial_{b(Q)}(P_{1}P_{2})-\partial_{b(Q)}(r))\\
 & \geq & \min\left\{ \mu(\partial_{b(Q)}(P_{1}P_{2})),\mu(\partial_{b(Q)}(r))\right\} \\
 & \geq & \min\left\{ \mu\left(\sum\limits _{j=0}^{b(Q)}\partial_{j}P_{1}\partial_{b(Q)-j}P_{2}\right),\mu(\partial_{b(Q)}r)\right\} \\
 & > & \min\limits _{0\leq j\leq b(Q)}\{\mu(P_{1})-j\epsilon_{\mu}(Q)+\mu(P_{2})-(b(Q)-j)\epsilon_{\mu}(Q),\mu(r)-b(Q)\epsilon_{\mu}(Q)\}\\
 & \ge & \mu(qQ)-b(Q)\epsilon_{\mu}(Q),
\end{array}$}{\LARGE \par}

\noindent {\LARGE{}which gives the desired contradiction. We have
proved that $\mu(P_{1}P_{2})=\mu(r)<\mu(qQ)$, so the Proposition holds
in the case $t=2$.}{\LARGE \par}

{\LARGE{}Assume, inductively, that $t>2$ and that the Proposition
is true for $t-1$. Let $P:=\prod\limits _{i=1}^{t-1}P_{i}$. Let
$P=q_{1}Q+r_{1}$ and $r_{1}P_{t}=q_{2}Q+r$ be the Euclidean divisions
by $Q$ of $P$ and $r_{1}P_{t}$, respectivly. Note that $q=q_{1}P_{t}+q_{2}$.
By the induction assumption, we have $\mu(r_{1})=\mu(P)<\mu(q_{1}Q)$,
hence $\mu(r_{1}P_{t})=\mu\left(\prod\limits _{i=1}^{t}P_{i}\right)<\mu(q_{1}P_{t}Q)$.
By the case $t=2$ we have $\mu(r_{1}P_{t})=\mu(r)<\mu(q_{2}Q)$.}{\LARGE \par}

{\LARGE{}Hence $\mu(r)=\mu(r_{1}P_{t})=\mu\left(\prod\limits _{i=1}^{t}P_{i}\right)<\min\left\{ \mu(q_{1}P_{t}Q),\mu(q_{2}Q)\right\} \leq\mu(q_{1}P_{t}Q+q_{2}Q)=\mu(qQ)$. }{\LARGE \par}\end{proof}
\begin{defn}
{\LARGE{}\label{defS}Let $Q$ be an abstract key polynomial for $\mu$,
$g$ an element of $K[x]$ and $g=\sum\limits _{j=0}^{s}g_{j}Q^{j}$
the $Q$-expansion of $g$. Put $S_{Q}(g)=\left\{ j\in\{0,\ldots,s\}\text{ such that }\mu(g_{j}Q^{j})=\mu{}_{Q}(g)\right\} $
and $\delta_{Q}(g):=\max S_{Q}(g)$. }{\LARGE \par}\end{defn}
\begin{prop}
{\LARGE{}If $Q$ is an abstract key polynomial, then $\mu_{Q}$ is
a valuation. }{\LARGE \par}\end{prop}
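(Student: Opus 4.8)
\medskip

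\noindent\textbf{Proof proposal.} The plan is to verify directly that $\mu_{Q}$ obeys the valuation axioms on $K[x]$: that it is $\Gamma$-valued on $K[x]\setminus\{0\}$ and restricts to $\mu$ on $K^{*}$, that it satisfies $\mu_{Q}(f+g)\ge\min(\mu_{Q}(f),\mu_{Q}(g))$, and that it is multiplicative. The first two points come for free. If $f=\sum_{i}f_{i}Q^{i}$ and $g=\sum_{i}g_{i}Q^{i}$ are the $Q$-expansions, then $f+g=\sum_{i}(f_{i}+g_{i})Q^{i}$ is the $Q$-expansion of $f+g$, since each $f_{i}+g_{i}$ has degree $<\deg Q$; hence $\mu_{Q}(f+g)=\min_{i}\mu\bigl((f_{i}+g_{i})Q^{i}\bigr)\ge\min_{i}\min\bigl(\mu(f_{i}Q^{i}),\mu(g_{i}Q^{i})\bigr)\ge\min(\mu_{Q}(f),\mu_{Q}(g))$. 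For $c\in K^{*}$ the $Q$-expansion of $c$ is $c$ itself, so $\mu_{Q}(c)=\mu(c)$; and if $g\ne 0$ then some $g_{j}\ne 0$, so $\mu_{Q}(g)\le\mu(g_{j}Q^{j})<\infty$.

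The substance is multiplicativity, $\mu_{Q}(fg)=\mu_{Q}(f)+\mu_{Q}(g)$, and Proposition~\ref{rem:prod} (with $t=2$) is exactly the tool for it (the cases $f=0$ or $g=0$ being trivial). Given nonzero $f,g$ with $Q$-expansions as above, perform, for each pair $(i,j)$ with $f_{i}g_{j}\ne 0$, the Euclidean division $f_{i}g_{j}=q_{ij}Q+r_{ij}$. Since $\deg f_{i},\deg g_{j}<\deg Q$, we get $\deg r_{ij}<\deg Q$ and $\deg q_{ij}\le 2\deg Q-2-\deg Q<\deg Q$, so after substituting into $fg=\sum_{i,j}f_{i}g_{j}Q^{i+j}$ and regrouping,
\[
fg=\sum_{k}a_{k}Q^{k},\qquad a_{k}:=\sum_{i+j=k}r_{ij}+\sum_{i+j=k-1}q_{ij},
\]
is \emph{already} the $Q$-expansion of $fg$, each $a_{k}$ being a sum of polynomials of degree $<\deg Q$. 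Thus $\mu_{Q}(fg)=\min_{k}\mu(a_{k}Q^{k})$. Proposition~\ref{rem:prod} gives, for $f_{i}g_{j}\ne 0$, that $\mu(r_{ij})=\mu(f_{i}g_{j})$ and $\mu(q_{ij}Q)>\mu(f_{i}g_{j})$, whence $\mu(r_{ij}Q^{i+j})=\mu(f_{i}Q^{i})+\mu(g_{j}Q^{j})$ and $\mu(q_{ij}Q^{i+j+1})>\mu(f_{i}Q^{i})+\mu(g_{j}Q^{j})$. Since $\mu(f_{i}Q^{i})\ge\mu_{Q}(f)$ and $\mu(g_{j}Q^{j})\ge\mu_{Q}(g)$ whenever $f_{i},g_{j}\ne 0$, every summand of every $a_{k}Q^{k}$ has value $\ge\mu_{Q}(f)+\mu_{Q}(g)$; hence $\mu_{Q}(fg)\ge\mu_{Q}(f)+\mu_{Q}(g)$.

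For the reverse inequality I would inspect the single coefficient $a_{k_{0}}$ with $k_{0}=\delta_{Q}(f)+\delta_{Q}(g)$. By the estimates above, a summand $r_{ij}Q^{k_{0}}$ with $i+j=k_{0}$ has value exactly $\mu_{Q}(f)+\mu_{Q}(g)$ precisely when $i\in S_{Q}(f)$ and $j\in S_{Q}(g)$; which, since $i\le\delta_{Q}(f)$ and $j\le\delta_{Q}(g)$ for such $i,j$ and $i+j=\delta_{Q}(f)+\delta_{Q}(g)$, forces $(i,j)=(\delta_{Q}(f),\delta_{Q}(g))$, while every other $r_{ij}$ with $i+j=k_{0}$ and every $q_{ij}$ with $i+j=k_{0}-1$ contributes strictly larger value. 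Moreover $r_{\delta_{Q}(f),\delta_{Q}(g)}\ne 0$, since $f_{\delta_{Q}(f)}g_{\delta_{Q}(g)}\ne 0$ in the domain $K[x]$ and $\mu(r_{\delta_{Q}(f),\delta_{Q}(g)})=\mu(f_{\delta_{Q}(f)}g_{\delta_{Q}(g)})<\infty$. Therefore $a_{k_{0}}Q^{k_{0}}$ is a sum of a single term of value $\mu_{Q}(f)+\mu_{Q}(g)$ together with terms of strictly larger value, so $\mu(a_{k_{0}}Q^{k_{0}})=\mu_{Q}(f)+\mu_{Q}(g)$, giving $\mu_{Q}(fg)\le\mu_{Q}(f)+\mu_{Q}(g)$. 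Combining the two inequalities yields $\mu_{Q}(fg)=\mu_{Q}(f)+\mu_{Q}(g)$, and hence $\mu_{Q}$ is a valuation.

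The only delicate point I anticipate is bookkeeping rather than a genuine obstacle: one must check that a \emph{single} round of Euclidean division of the products $f_{i}g_{j}$ by $Q$ already produces the $Q$-expansion of $fg$, with no cascading reductions needed, and this is precisely what the bound $\deg q_{ij}<\deg Q$ provides; one must also keep exact track of which indices realize the relevant minima in order to isolate the term $r_{\delta_{Q}(f),\delta_{Q}(g)}Q^{k_{0}}$. Once Proposition~\ref{rem:prod} is available, all the value estimates are immediate.
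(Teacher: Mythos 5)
Your proof is correct and follows essentially the same route as the paper's: subadditivity is immediate from the $Q$-expansion of a sum, the lower bound $\mu_{Q}(fg)\geq\mu_{Q}(f)+\mu_{Q}(g)$ comes from applying Proposition \ref{rem:prod} to each product $f_{i}g_{j}$, and equality is obtained by isolating an extremal coefficient of the resulting $Q$-expansion. The only (immaterial) differences are that you single out the index $\delta_{Q}(f)+\delta_{Q}(g)$ built from the maxima of $S_{Q}(f)$ and $S_{Q}(g)$ where the paper uses the minima, and that you work with $f,g$ directly rather than first reducing to $\mathrm{In}_{Q}f$ and $\mathrm{In}_{Q}g$.
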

\begin{proof}
{\LARGE{}First, for any polynomials $f$ and $g$, we have 
\begin{equation}
\mu_{Q}(f+g)\geq\min\left\{ \mu_{Q}(f),\mu_{Q}(g)\right\} .\label{eq:largeinequality}
\end{equation}
We want to show that 
\begin{equation}
\mu_{Q}(fg)=\mu_{Q}(f)+\mu_{Q}(g).\label{eq:additivity}
\end{equation}
If both $f$ and $g$ have degree strictly less than $\deg(Q)$, we
have $\mu_{Q}(f)=\mu(f)$ and $\mu_{Q}(g)=\mu(g)$. Furthermore, by
Proposition \ref{rem:prod}, $\mu_{Q}(fg)=\mu(fg)$. Since $\mu$
is a valuation, (\ref{eq:additivity}) holds.}{\LARGE \par}

{\LARGE{}Next, let $i$, and $j$ be two non-negative integers. Let
$f_{i}$ and $g_{j}$ be two polynomials of degree strictly less than
$\deg(Q)$ and let $f_{i}g_{j}=aQ+b$ be the Euclidean division of
$f_{i}g_{j}$ be $Q$. We have $\deg\ a,\deg\ b<\deg\ Q$ and 
\begin{equation}
\mu(f_{i}g_{j})=\mu(b)<\mu(aQ)\label{eq:prop10}
\end{equation}
(by Proposition \ref{rem:prod}). Then $(f_{i}Q^{i})(g_{j}Q^{j})=aQ^{i+j+1}+bQ^{i+j}$
is a $Q$-expansion of $(f_{i}Q^{i})(g_{j}Q^{j})$. By definition
of $\mu_{Q}$ and (\ref{eq:prop10}) we have 
\begin{equation}
\mu_{Q}(f_{i}g_{j}Q^{i+j})=\mu\left(bQ^{i+j}\right)=\mu(f_{i}Q^{i})+\mu(g_{j}Q^{j})=\mu_{Q}(f_{i}Q^{i})+\mu_{Q}(g_{j}Q^{j}),\label{eq:monomialcase}
\end{equation}
which proves the equality (\ref{eq:additivity}) for $f=f_{i}Q^{i}$
and $g=g_{j}Q^{j}$ with $\deg\ f_{i},\deg\ g_{j}<\deg\ Q$.}{\LARGE \par}

{\LARGE{}It remains to show the equality (\ref{eq:additivity}) for
arbitrary polynomials $f=\sum\limits _{j=0}^{n}f_{j}Q^{j}$ and $g=\sum\limits _{j=0}^{m}g_{j}Q^{j}$.
It is sufficient to consider the case when all the terms in the $Q$-expansion
of $f$ have the same value and similarly for $g$. In other words,
we may replace $f$ and $g$ by $\mathrm{In_{Q}}f$ and $\mathrm{In_{Q}}g$,
respectively. By (\ref{eq:largeinequality}), (\ref{eq:monomialcase})
and the distributive law, we have 
\begin{equation}
\mu_{Q}(fg){\geq}\mu_{Q}(f)+\mu_{Q}(g).\label{eq:muQinequality}
\end{equation}
It remains to show that (\ref{eq:muQinequality}) is, in fact, an
equality. Let $n_{0}:=\min S_{Q}(f)$ and $m_{0}:=\min S_{Q}(g)$.
We denote by 
\[
f_{n_{0}}g_{m_{0}}=qQ+r
\]
the $Q$-expansion of $f_{n_{0}}g_{m_{0}}$. Hence the $Q$-expansion
of $\mathrm{In}_{Q}(f)\mathrm{In}_{Q}(g)$ contains the term $rQ^{n_{0}+m_{0}}$,
which, by Proposition \ref{rem:prod}, is of value 
$$\mu_{Q}(rQ^{n_{0}+m_{0}})=\mu(rQ^{n_{0}+m_{0}})=\mu(f_{n_{0}}Q^{n_{0}}g_{m_{0}}Q^{m_{0}})=\mu_{Q}(f)+\mu_{Q}(g)$$.
This completes the proof. }{\LARGE \par}\end{proof}
\begin{rem}
{\LARGE{}\label{transc} Let $\alpha:=\deg_{x}Q$. We define $G_{<\alpha}:=\sum\limits _{\deg_{x}P<\alpha}(\mathrm{in}_{\mu_{Q}}P)\mathrm{gr}_{\mu}K\subset\mathrm{gr}_{\mu_{Q}}K[x]$.
It follows from the $t=2$ case of Proposition \ref{rem:prod} that
$G_{<\alpha}$ is closed under multiplication, so it is, in fact,
a ring. The ring $G_{<\alpha}$ embeds into $\mathrm{gr}_{\mu}K[x]$
by the natural map which sends $\mathrm{in}_{\mu_{Q}}f$ to $\mathrm{in}_{\mu}f$
for each polynomial $f$ of degree strictly less than $\alpha$. We
have 
\[
\mathrm{gr}_{\mu_{Q}}K[x]=G_{<\alpha}[\mathrm{in}_{\mu_{Q}}Q],
\]
where $\mathrm{in}_{\mu_{Q}}Q$ is transcendental over $G_{<\alpha}$.
In particular, $\mathrm{in}_{\mu_{Q}}Q$ is irreducible in $\mathrm{gr}_{\mu_{Q}}K[x]$. }{\LARGE \par}\end{rem}
\begin{lem}
{\LARGE{}\label{inegalite} For every polynomial $f\in K[x]$ and
every $b\in\mathbb{N}^{\ast}$ we have 
$$\mu_{Q}(\partial_{b}f)\geq\mu_{Q}(f)-b\epsilon_{\mu}(Q)$$. }{\LARGE \par}\end{lem}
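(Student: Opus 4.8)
The plan is to reduce the inequality to the case of a single term of the $Q$-expansion of $f$, and then to prove that case by induction on the power of $Q$ occurring in the term, using nothing more than the product rule for the operators $\partial_b$, the defining property of an abstract key polynomial, and the fact --- established in the Proposition preceding Remark \ref{transc} --- that $\mu_Q$ is a valuation. For the reduction, write the $Q$-expansion $f=\sum_j f_jQ^j$. Since each $\partial_b$ is $K$-linear, $\partial_bf=\sum_j\partial_b(f_jQ^j)$, so the ultrametric inequality for $\mu_Q$ gives $\mu_Q(\partial_bf)\ge\min_j\mu_Q\bigl(\partial_b(f_jQ^j)\bigr)$; on the other hand $\mu_Q(f)=\min_j\mu(f_jQ^j)=\min_j\mu_Q(f_jQ^j)$, using that $\mu_Q$ coincides with $\mu$ on a polynomial of degree $<\deg Q$ times a power of $Q$. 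Hence it is enough to prove
\[
\mu_Q\bigl(\partial_b(gQ^j)\bigr)\ \ge\ \mu_Q(gQ^j)-b\,\epsilon_\mu(Q)
\]
for every $j\ge0$, every $b\in\mathbb{N}^{*}$, and every $g\in K[x]$ with $\deg g<\deg Q$; taking the minimum over $j$ then recovers the statement of the lemma.

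Two numerical facts feed the induction. First, if $P\in K[x]$ satisfies $\deg P<\deg Q$, then $\epsilon_\mu(P)\le\epsilon_\mu(Q)$ --- otherwise the definition of an abstract key polynomial would force $\deg P\ge\deg Q$ --- and therefore $\mu(\partial_iP)\ge\mu(P)-i\,\epsilon_\mu(P)\ge\mu(P)-i\,\epsilon_\mu(Q)$ for every $i\in\mathbb{N}^{*}$; this also holds trivially for $i=0$ (with $\partial_0=\mathrm{id}$) and whenever $\partial_iP=0$. Second, the definition of $\epsilon_\mu(Q)$ gives directly $\mu(\partial_iQ)\ge\mu(Q)-i\,\epsilon_\mu(Q)$ for every $i\ge1$, and $\deg\partial_iQ<\deg Q$ for such $i$, so $\mu_Q(\partial_iQ)=\mu(\partial_iQ)$. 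I also record the obvious facts $\mu_Q(Q)=\mu(Q)$ and $\mu_Q(gQ^j)=\mu(g)+j\mu(Q)$ when $\deg g<\deg Q$.

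Now I would induct on $j$. The case $j=0$ is the first numerical fact applied to $g$, together with $\mu_Q(\partial_bg)=\mu(\partial_bg)$ and $\mu_Q(g)=\mu(g)$. For the step $j\to j+1$, write $gQ^{j+1}=(gQ^j)\cdot Q$ and expand by the product rule:
\[
\partial_b(gQ^{j+1})=\sum_{i=0}^{b}\partial_i(gQ^j)\,\partial_{b-i}Q .
\]
Since $\mu_Q$ is a valuation, the $i$-th summand has $\mu_Q$-value $\mu_Q(\partial_i(gQ^j))+\mu_Q(\partial_{b-i}Q)$. For $0<i<b$ one bounds the first term by the induction hypothesis and the second by the second numerical fact; the cases $i=0$ (where $\partial_0(gQ^j)=gQ^j$) and $i=b$ (where $\partial_0Q=Q$) are handled the same way, with one of the two bounds trivial. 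In every case the value of the summand is at least $(j+1)\mu(Q)+\mu(g)-b\,\epsilon_\mu(Q)=\mu_Q(gQ^{j+1})-b\,\epsilon_\mu(Q)$, precisely because the orders $i$ and $b-i$ of the two derivatives always add up to $b$; the ultrametric inequality then yields the displayed estimate at level $j+1$, completing the induction. The argument is essentially bookkeeping, and the only point requiring attention is to keep straight where $\mu_Q$ agrees with $\mu$ (on polynomials of degree $<\deg Q$ and on $Q$) versus where one invokes that $\mu_Q$ is multiplicative; I do not expect a genuine obstacle beyond this.
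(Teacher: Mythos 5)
Your proposal is correct and follows essentially the same route as the paper: reduce to a single term $f_jQ^j$ of the $Q$-expansion, handle the coefficient $f_j$ via the abstract-key-polynomial property ($\deg f_j<\deg Q$ forces $\epsilon_\mu(f_j)\le\epsilon_\mu(Q)$), handle $Q$ itself via the definition of $\epsilon_\mu(Q)$, and combine with the Leibniz rule and the multiplicativity of $\mu_Q$. Your induction on the exponent $j$ is just the paper's ``if the bound holds for $f$ and $g$ then it holds for $fg$'' step unrolled with one factor equal to $Q$, so there is no substantive difference.
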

\begin{proof}
{\LARGE{}Let $f=\sum\limits _{j=0}^{s}f_{j}Q^{j}$ be the $Q$-expansion
of $f$.}{\LARGE \par}

{\LARGE{}It is enough to show the result for $f=f_{j}Q^{j}$. Indeed,
if we have the result in this case, then}{\LARGE \par}

{\LARGE{}$\begin{array}{ccc}
\mu_{Q}(\partial_{b}f) & = & \mu_{Q}\left(\sum\limits _{j=0}^{s}\partial_{b}(f_{j}Q^{j})\right)\\
 & \geq & \min\limits _{0\leq j\leq s}\left\{ \mu_{Q}\left(\partial_{b}(f_{j}Q^{j})\right)\right\} \\
 & \geq & \min\limits _{0\leq j\leq s}\left\{ \mu_{Q}(f_{j}Q^{j})-b\epsilon_{\mu}(Q)\right\} \\
 & \geq & \min\limits _{0\leq j\leq s}\left\{ \mu_{Q}(f_{j}Q^{j})\right\} -b\epsilon(Q)\\
 & \geq & \mu(f)-b\epsilon_{\mu}(Q).
\end{array}$}{\LARGE \par}

{\LARGE{}Now, let us show the result for $f=f_{j}Q^{j}$. First, we
show it for $f=f_{j}$.
\\Indeed, $\epsilon_{\mu_{Q}}(f_{j})=\epsilon_{\mu}(f_{j})<\epsilon_{\mu}(Q)$
since $Q$ is an abstract key polynomial of degree strictly superior
than $\deg(f_{j})$. Hence 
\[
\mu_{Q}(\partial_{b}f_{j})=\mu(\partial_{b}f_{j})>\mu_{Q}(f_{j})-b\epsilon_{\mu}(Q).
\]
This proves the Lemma with $f$ replaced by $f_{j}$.}{\LARGE \par}

{\LARGE{}We have $\mu_{Q}(\partial_{b}Q)\geq\mu_{Q}(Q)-b\epsilon_{\mu}(Q)$.}{\LARGE \par}

{\LARGE{}To finish the proof of the Lemma, it remains to show that
if we have the result for two polynomials $f$ and $g$, we have the
result for the product $fg$. Let us suppose that we have the result
for two polynomials $f$ and $g$.}{\LARGE \par}

{\LARGE{}Then,}{\LARGE \par}

$\begin{array}{ccc}
\mu_{Q}(\partial_{b}(fg)) & = & \mu_{Q}\left(\sum\limits _{s=0}^{b}\partial_{s}f\partial_{b-s}g\right)\\
 & \geq & \min\limits _{0\leq s\leq b}\left\{ \mu_{Q}(\partial_{s}f)+\mu_{Q}(\partial_{b-s}(g))\right\} \\
 & \geq & \min\limits _{0\leq s\leq b}\left\{ \mu_{Q}(\partial_{s}f)\right\} +\min\limits _{0\leq s\leq b}\left\{ \mu_{Q}(\partial_{b-s}(g))\right\} \\
 & \geq & \mu_{Q}(f)-s\epsilon_{\mu}(Q)+\mu_{Q}(g)-(b-s)\epsilon_{\mu}(Q)\\
 & \geq & \mu_{Q}(fg)-b\epsilon_{\mu}(Q)
\end{array}$.

{\LARGE{}This completes the proof. }{\LARGE \par}\end{proof}
\begin{prop}
{\LARGE{}\label{THEprop} Let the notation be as in Definition \ref{defS}.
If $S_{Q}(g)\neq\{0\}$ then there exists $b\in\mathbb{N}^{\ast}$
such that $\frac{\mu_{Q}(g)-\mu_{Q}(\partial_{b}g)}{b}=\epsilon_{\mu}(Q)$. }{\LARGE \par}\end{prop}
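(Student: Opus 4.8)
The plan is to combine the general inequality of Lemma~\ref{inegalite} with a matching lower bound obtained by analyzing the top term of the $Q$-expansion. First I would recall that Lemma~\ref{inegalite} already gives $\mu_{Q}(\partial_{b}g)\geq\mu_{Q}(g)-b\epsilon_{\mu}(Q)$ for every $b$, so it suffices to produce a \emph{single} $b\in\mathbb{N}^{\ast}$ for which the reverse inequality $\mu_{Q}(\partial_{b}g)\leq\mu_{Q}(g)-b\epsilon_{\mu}(Q)$ holds; equality for that $b$ will then follow. The natural candidate is $b=b(Q)\cdot\delta_{Q}(g)$ (or, if that overshoots the degree, $b(Q)$ times the largest index that can actually be differentiated nontrivially), the idea being that differentiating the term $g_{\delta}Q^{\delta}$ with $\delta=\delta_{Q}(g)$ exactly $b(Q)$ times in the $Q$-factor realizes the extremal ratio $\epsilon_{\mu}(Q)$.

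The key computational step is to expand $\partial_{b}(g_{\delta}Q^{\delta})$ by the Leibniz rule and identify the dominant term. For a single factor $Q^{\delta}$, the term coming from applying $\partial_{b(Q)}$ to one copy of $Q$ and leaving the others alone has $\mu_{Q}$-value $\mu_{Q}(Q^{\delta})-b(Q)\epsilon_{\mu}(Q)$, by the definition of $b(Q)$ and $\epsilon_{\mu}(Q)$, and one checks via Lemma~\ref{inegalite} (applied to $Q$ and to the lower derivatives) that every other term in the Leibniz expansion of $\partial_{b(Q)}(Q^{\delta})$ has \emph{strictly} larger value, so $\mathrm{in}_{\mu_{Q}}\partial_{b(Q)}(Q^{\delta})$ is a nonzero multiple of $\mathrm{in}_{\mu_{Q}}Q^{\delta-1}\cdot\mathrm{in}_{\mu_{Q}}\partial_{b(Q)}Q$; this is where Remark~\ref{transc}, i.e.\ that $\mathrm{in}_{\mu_{Q}}Q$ is transcendental over $G_{<\alpha}$, is needed to guarantee no cancellation (the binomial coefficient $\binom{\delta}{1}=\delta$ could vanish in positive characteristic, so one must be a little careful — in that case one passes to the smallest power $p^{e}$ dividing appropriately, or rather re-examines which $b$ gives the extremal ratio; more cleanly, one takes $b$ to be $b(Q)$ times the $p$-free part, or simply argues with $\delta_{Q}(g)$ replaced by the largest $j\in S_{Q}(g)$ with $\binom{j}{?}\neq 0$). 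Multiplying by $g_{\delta}$ and using that $\deg g_{\delta}<\deg Q$ forces $\epsilon_{\mu}(g_{\delta})<\epsilon_{\mu}(Q)$, so $g_{\delta}$ and all its derivatives behave like "constants" relative to $\epsilon_{\mu}(Q)$, one gets $\mu_{Q}(\partial_{b(Q)}(g_{\delta}Q^{\delta}))=\mu_{Q}(g_{\delta}Q^{\delta})-b(Q)\epsilon_{\mu}(Q)=\mu_{Q}(g)-b(Q)\epsilon_{\mu}(Q)$.

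Next I would handle the other terms $g_{j}Q^{j}$ of the $Q$-expansion. For $j\notin S_{Q}(g)$ we have $\mu_{Q}(g_{j}Q^{j})>\mu_{Q}(g)$, hence by Lemma~\ref{inegalite} $\mu_{Q}(\partial_{b(Q)}(g_{j}Q^{j}))>\mu_{Q}(g)-b(Q)\epsilon_{\mu}(Q)$, so these cannot interfere. For $j\in S_{Q}(g)$ with $j<\delta$, the term $\partial_{b(Q)}(g_{j}Q^{j})$ has $\mu_{Q}$-value $\mu_{Q}(g)-b(Q)\epsilon_{\mu}(Q)$ at best, but its initial form is a multiple of $\mathrm{in}_{\mu_{Q}}Q^{j-1}$, hence of degree $<\delta-1$ in $\mathrm{in}_{\mu_{Q}}Q$; since $\mathrm{in}_{\mu_{Q}}Q$ is transcendental over $G_{<\alpha}$, these initial forms live in distinct graded pieces indexed by the $\mathrm{in}_{\mu_{Q}}Q$-degree and cannot cancel the contribution from $j=\delta$. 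Therefore $\mathrm{in}_{\mu_{Q}}\partial_{b(Q)}g$ is nonzero of value exactly $\mu_{Q}(g)-b(Q)\epsilon_{\mu}(Q)$, which gives $\mu_{Q}(\partial_{b(Q)}g)=\mu_{Q}(g)-b(Q)\epsilon_{\mu}(Q)$, i.e.\ the desired $b$ is $b(Q)$ and the ratio equals $\epsilon_{\mu}(Q)$.

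The main obstacle is the characteristic-$p$ subtlety: the binomial coefficients appearing when differentiating $Q^{\delta}$ can vanish, so "differentiate $Q$ once, $b(Q)$ times" may give zero, and one must instead choose the number of times each copy of $Q$ is hit so that the relevant multinomial coefficient is a unit while still achieving the ratio $\epsilon_{\mu}(Q)$ — this is exactly the reason the statement asks only for the \emph{existence} of some $b$ rather than pinning down $b=b(Q)$. I expect the fix is to replace $\delta=\delta_{Q}(g)$ by a suitable $j\in S_{Q}(g)$, $j\geq 1$ (which exists precisely because $S_{Q}(g)\neq\{0\}$), and to take $b$ to be $b(Q)$ times the number of nonvanishing "slots," invoking Remark~\ref{transc} to rule out cancellation across different powers of $\mathrm{in}_{\mu_{Q}}Q$; the rest is the bookkeeping sketched above together with repeated use of Lemma~\ref{inegalite} and the inequality $\epsilon_{\mu}(g_{j})<\epsilon_{\mu}(Q)$.
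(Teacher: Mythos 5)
Your overall strategy is the right one and matches the paper's: Lemma~\ref{inegalite} supplies the inequality $\mu_{Q}(\partial_{b}g)\geq\mu_{Q}(g)-b\epsilon_{\mu}(Q)$ for all $b$, so it suffices to exhibit one $b$ where the reverse inequality holds, and one looks for it by applying the Leibniz rule to a well-chosen term $g_{j}Q^{j}$ of $\mathrm{In}_{Q}(g)$ and isolating a contribution of value exactly $\mu_{Q}(g)-b\epsilon_{\mu}(Q)$. In characteristic zero (or whenever some $j\in S_{Q}(g)\setminus\{0\}$ is prime to $p$) your argument with $b=b(Q)$ does close up, because then every critical-value contribution of $\partial_{b(Q)}(g_{j}Q^{j})$ sits in $Q$-degree exactly $j-1$, so contributions from distinct $j$ occupy distinct powers of $Q$ and cannot cancel.

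The genuine gap is the characteristic-$p$ case, which you flag but do not resolve, and your proposed fixes point in the wrong direction. When every $j\in S_{Q}(g)\setminus\{0\}$ is divisible by $p$ one is forced to take $b=p^{e}b(Q)$ with $e\geq1$, and then your no-cancellation argument breaks: the critical-value part of $\partial_{b}(g_{j}Q^{j})$ is no longer concentrated in a single power of $Q$ but is spread over the degrees $j-s$ for the various $s\leq p^{e}$ with $\sum_{i=1}^{s}b_{i}=p^{e}b(Q)$, $b_{i}\in I(Q)$; if $I(Q)\neq\{b(Q)\}$, two different indices $j<j'$ in $S_{Q}(g)$ can contribute critical-value terms to the \emph{same} $Q$-degree, so transcendence of $\mathrm{in}_{\mu_{Q}}Q$ alone does not rule out cancellation. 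This is why the correct choice is the \emph{minimum} $l$ of $S_{Q}(g)\setminus\{0\}$, not the maximum $\delta_{Q}(g)$: writing $l=p^{e}u$ with $p\nmid u$ and taking $b=p^{e}b(Q)$, the distinguished term $g_{l}\left(\partial_{b(Q)}Q\right)^{p^{e}}Q^{l-p^{e}}$ appears with coefficient $\binom{l}{p^{e}}\equiv u\not\equiv0\pmod p$ (Lucas), and minimality of $l$ forces every other critical-value contribution, from $j=l$ or from $j>l$, into $Q$-degrees $\geq l-p^{e}+1$; hence the coefficient of $Q^{l-p^{e}}$ in $\partial_{b}g$ has value exactly $\mu_{Q}(g)-b\epsilon_{\mu}(Q)$ and nothing can kill it. Your alternatives ("$b(Q)$ times the $p$-free part", "the largest $j$ with nonvanishing binomial coefficient") either choose the wrong $b$ or leave open exactly the cancellation problem just described, so as written the proof does not go through in positive characteristic.
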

\begin{proof}
{\LARGE{}First, replacing $g$ by $\mathrm{In}_{Q}(g)=\sum\limits _{j\in S_{Q}(g)}g_{j}Q^{j}$
does not change the problem. We want to show the existence of a strictly
positive integer $b$ such that $\mu_{Q}(\partial_{b}g)=\mu_{Q}(g)-b\epsilon_{\mu}(Q)$.}{\LARGE \par}

{\LARGE{}Let $l$ be the minimum of $S_{Q}(g)\setminus\{0\}$. Write
$l=p^{e}u$, with $p\nmid u$. Let $b:=p^{e}b(Q)\in\mathbb{N}^{\ast}$.
We calculate $\partial_{b}g$. }{\LARGE \par}
\begin{lem}
{\LARGE{}We have $\partial_{b}g=urQ^{l-p^{e}}+Q^{l-p^{e}+1}R+S$,
where: }{\LARGE \par}\end{lem}
\begin{enumerate}
\item {\LARGE{}$r$ is the remainder of the Euclidean division of $g_{l}\left(\partial{}_{b(Q)}Q\right)^{p^{e}}$
by $Q$. }{\LARGE \par}
\item {\LARGE{}$R\in K[x]$. }{\LARGE \par}
\item {\LARGE{}$S\in K[x]$ and $\mu_{Q}(S)>\mu_{Q}(g)-b\epsilon_{\mu}(Q)$. }{\LARGE \par}\end{enumerate}
\begin{proof}
{\LARGE{}First, let us show that the Lemma holds for $g=g_{l}Q^{l}$
and that for every integer $j\in S_{Q}(g)\setminus\{l\}$, we have
$\partial_{b}(g_{j}Q^{j})=Q^{l-p^{e}+1}R_{j}+S_{j}$, where $\left(R_{j},S_{j}\right)\in K[x]^{2}$
such that $\mu_{Q}(S_{j})>\mu_{Q}(g)-b\epsilon_{\mu}(Q)$.}{\LARGE \par}

{\LARGE{}Let us prove these two statements together.}{\LARGE \par}

{\LARGE{}First, put $M_{j}:=\left\{ \left(b_{0},\ldots,b_{s}\right)\in\mathbb{N}^{s+1}\text{, }b_{0}+\ldots+b_{s}=b\text{, }s\leq j\right\} $.}{\LARGE \par}

{\LARGE{}By the Leibnitz rule,}{\LARGE \par}

{\LARGE{}$\partial_{b}(g_{j}Q^{j})=\sum\limits _{\begin{array}{c}
\left(b_{0},\ldots,b_{s}\right)\in M_{j}\end{array}}\underset{:=T(b_{0},\ldots,b_{s})}{\left(\underbrace{C(b_{0},\ldots,b_{s})\partial_{b_{0}}g_{j}\left(\prod\limits _{i=1}^{s}\partial_{b_{i}}Q\right)Q^{j-s}}\right)}$. where $C(b_{0},\ldots,b_{s})$ are certain integers whose exact
values can be found in \cite{HMOS}. Here by ``integer'' we mean
an element of the image of the natural map $\mathbb{N}\rightarrow K$,
that is, an element of $\mathbb{N}$ or $\mathbb{F}_{p}$ depending
on whether the characteristic of $K$ is 0 or $p>0$.}{\LARGE \par}

{\LARGE{}Put $N_{j}:=\left\{ \left(b_{0},\ldots,b_{s}\right)\in M_{j}\text{ such that }b_{0}>0\text{ or }\left\{ b_{1},\ldots,b_{s}\right\} \nsubseteq I(Q)\right\} $,}{\LARGE \par}

{\LARGE{}$S_{j}:=\sum\limits _{\left(b_{0},\ldots,b_{s}\right)\in N_{j}}T(b_{0},\ldots,b_{s})$,
$\alpha:=(0,\underset{p^{e}}{\underbrace{b(Q),\ldots,b(Q)}})$ and}{\LARGE \par}

{\LARGE{}$Q^{l-p^{e}+1}R_{j}:=\begin{cases}
\sum\limits _{\left(b_{0},\ldots,b_{s}\right)\in M_{j}\setminus N_{j}}T(b_{0},\ldots,b_{s}) & \text{if }j\neq l\\
\sum\limits _{\begin{array}{c}
\left(b_{0},\ldots,b_{s}\right)\in M_{j}\setminus N_{j}\\
(b_{0},\ldots,b_{s})\neq\alpha
\end{array}}T(b_{0},\ldots,b_{s}) & \text{if }j=l
\end{cases}$}{\LARGE \par}

{\LARGE{}If $j=l$, the number of times the term $T(0,\underset{p^{e}}{\underbrace{b(Q),\ldots,b(Q)}})$
appears in $\partial_{b}(g_{l}Q^{l})$ is $\binom{l}{p^{e}}=u$. Performing
the Euclidean division of $T(0,\underset{p^{e}}{\underbrace{b(Q),\ldots,b(Q)}})$
by $Q$, we obtain 
$$T(0,\underset{p^{e}}{\underbrace{b(Q),\ldots,b(Q)}})=R_{0}Q^{l-p^{e}+1}+urQ^{l-p^{e}}$$.}{\LARGE \par}

{\LARGE{}We are now in the position to calculate $\partial_{b}g$:}{\LARGE \par}

{\LARGE{}$\begin{array}{ccc}
\partial_{b}g & = & \partial_{b}\left(\sum\limits _{j\in S_{Q}(g)}g_{j}Q^{j}\right)\\
 & = & \partial_{b}(g_{l}Q^{l})+\sum\limits _{j\in S_{Q}(g)\setminus\{l\}}\partial_{b}(g_{j}Q^{j})\\
 & = & urQ^{l-p^{e}}+Q^{l-p^{e}+1}R_{l}+S_{l}+\sum\limits _{j\in S_{Q}(g)\setminus\{l\}}\left(Q^{l-p^{e}+1}R_{j}+S_{j}\right)\\
 & = & urQ^{l-p^{e}}+Q^{l-p^{e}+1}\left(\underset{:=R}{\underbrace{R_{l}+\sum\limits _{j\in S_{Q}(g)\setminus\{l\}}R_{j}}}\right)+\underset{:=S}{\underbrace{S_{l}+\sum\limits _{j\in S_{Q}(g)\setminus\{l\}}S_{j}}}
\end{array}$}{\LARGE \par}

{\LARGE{}with}{\LARGE \par}

{\LARGE{}$\begin{array}{ccc}
\mu_{Q}(S) & \geq & \min\left\{ \mu_{Q}(S_{l}),\mu_{Q}\left(\sum\limits _{j\in S_{Q}(g)\setminus\{l\}}S_{j}\right)\right\} \\
 & \geq & \min\limits _{j\in S_{Q}(g)}\left\{ \mu_{Q}(S_{j}),\right\} \\
 & > & \mu_{Q}(g)-b\epsilon_{\mu}(Q).
\end{array}$}{\LARGE \par}

{\LARGE{}This completes the proof of the Lemma. }{\LARGE \par}
\end{proof}
{\LARGE{}Next, in view of Lemma \ref{inegalite}, we have $\mu_{Q}(\partial_{b}g)\geq\mu_{Q}(g)-b\epsilon_{\mu}(Q)$.}{\LARGE \par}

{\LARGE{}Hence the $Q$-expansion of $\partial_{b}g$ contains the
term $urQ^{l-p^{e}}$ and terms wich either are divisible by $Q^{l-p^{e}+1}$
or have value greater than $\mu_{Q}(g)-b\epsilon_{\mu}(Q)$. To complete
the proof of the Proposition, it is sufficient to show that $\mu_{Q}(urQ^{l-p^{e}})=\mu_{Q}(rQ^{l-p^{e}})=\mu_{Q}(g)-b\epsilon_{\mu}(Q)$.}{\LARGE \par}

{\LARGE{}By Proposition \ref{rem:prod} we have $\mu(r)=\mu_{Q}(r)=\mu(g_{l}\left(\partial_{b(Q)}Q)^{p^{e}}\right)$,
hence}{\LARGE \par}

$\begin{array}{ccccc}
\mu_{Q}\left(rQ^{l-p^{e}}\right) & = & \mu\left(rQ^{l-p^{e}}\right) & = & \mu\left(g_{l}\left(\partial_{b(Q)}Q\right)^{p^{e}}Q^{l-p^{e}}\right) \\
& = & \mu(g_{l}Q^{l})+p^{e}\mu\left(\partial_{b(Q)}Q\right)-p^{e}\mu(Q)& = &\mu(g_{l}Q^{l})-p^{e}b(Q)\epsilon_{\mu}(Q) \\
 & = &  \mu_{Q}(g)-b\epsilon_{\mu}(Q).& &
\end{array}$

{\LARGE{}This completes the proof. }{\LARGE \par}\end{proof}
\begin{rem}
{\LARGE{}It can be shown that the implication of Proposition \ref{THEprop}
is, in fact, an equivalence. This will be accomplished in a forthcoming
paper. }{\LARGE \par}\end{rem}
\begin{cor}
{\LARGE{}\label{corcadeftt} Let $Q$ be an abstract key polynomial
and $f\in K[x]$. Suppose that there exists an integer $b\in\mathbb{N}^{\ast}$
such that $\frac{\mu_{Q}(f)-\mu_{Q}(\partial_{b}f)}{b}=\epsilon_{\mu}(Q)$
and $\mu_{Q}(\partial_{b}f)=\mu(\partial_{b}f)$. Then $\epsilon_{\mu}(f)\geq\epsilon_{\mu}(Q)$.}{\LARGE \par}

{\LARGE{}If moreover we have $\mu(f)>\mu_{Q}(f),$ then $\epsilon_{\mu}(f)>\epsilon_{\mu}(Q)$. }{\LARGE \par}\end{cor}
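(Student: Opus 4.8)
The plan is to reduce everything to the single elementary fact that $\mu_{Q}(g)\le\mu(g)$ for every $g\in K[x]$. Indeed, if $g=\sum_{j}g_{j}Q^{j}$ is the $Q$-expansion of $g$, then $\mu(g)=\mu\bigl(\sum_{j}g_{j}Q^{j}\bigr)\ge\min_{j}\mu(g_{j}Q^{j})=\mu_{Q}(g)$ because $\mu$ is a valuation. In particular $\mu(f)\ge\mu_{Q}(f)$, and this is the only inequality we will need beyond the two hypotheses.

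First I would chain the estimates. Using the hypothesis $\mu_{Q}(\partial_{b}f)=\mu(\partial_{b}f)$ together with $\mu(f)\ge\mu_{Q}(f)$ and the hypothesis $\frac{\mu_{Q}(f)-\mu_{Q}(\partial_{b}f)}{b}=\epsilon_{\mu}(Q)$, we get
$$
\frac{\mu(f)-\mu(\partial_{b}f)}{b}=\frac{\mu(f)-\mu_{Q}(\partial_{b}f)}{b}\ \ge\ \frac{\mu_{Q}(f)-\mu_{Q}(\partial_{b}f)}{b}=\epsilon_{\mu}(Q).
$$
Since by definition $\epsilon_{\mu}(f)=\max_{c\in\mathbb{N}^{\ast}}\frac{\mu(f)-\mu(\partial_{c}f)}{c}$, the particular value $b$ furnished by the hypothesis already gives $\epsilon_{\mu}(f)\ge\frac{\mu(f)-\mu(\partial_{b}f)}{b}\ge\epsilon_{\mu}(Q)$, which is the first assertion.

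For the refined statement I would simply observe that the added hypothesis $\mu(f)>\mu_{Q}(f)$ makes the middle inequality in the displayed chain strict, so that $\frac{\mu(f)-\mu(\partial_{b}f)}{b}>\epsilon_{\mu}(Q)$, whence $\epsilon_{\mu}(f)\ge\frac{\mu(f)-\mu(\partial_{b}f)}{b}>\epsilon_{\mu}(Q)$.

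There is no real obstacle here: the only points requiring care are the direction of the inequality $\mu_{Q}\le\mu$ and the fact that $\epsilon_{\mu}(f)$ is a supremum over all $b$, so a single admissible $b$ suffices to bound it from below. Conceptually, the corollary records that Proposition \ref{THEprop}, under the extra condition $\mu_{Q}(\partial_{b}f)=\mu(\partial_{b}f)$, transfers a witness for the "$\mu_{Q}$-version" of $\epsilon$ into a genuine witness for $\epsilon_{\mu}$.
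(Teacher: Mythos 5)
Your proposal is correct and follows essentially the same route as the paper: both arguments substitute $\mu(\partial_{b}f)=\mu_{Q}(\partial_{b}f)$ into $\epsilon_{\mu}(f)\ge\frac{\mu(f)-\mu(\partial_{b}f)}{b}$, use the hypothesis to identify $\mu_{Q}(\partial_{b}f)$ with $\mu_{Q}(f)-b\epsilon_{\mu}(Q)$, and conclude via $\mu(f)\ge\mu_{Q}(f)$ (an inequality you spell out explicitly, which the paper leaves implicit). The strict case is handled identically in both.
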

\begin{proof}
{\LARGE{}We have $\epsilon_{\mu}(f)\geq\frac{\mu(f)-\mu(\partial_{b}f)}{b}=\frac{\mu(f)-\mu_{Q}(\partial_{b}f)}{b}=\frac{\mu(f)+b\epsilon_{\mu}(Q)-\mu_{Q}(f)}{b}$. This means that $\epsilon_{\mu}(f)=\epsilon_{\mu}(Q)+\frac{\mu(f)-\mu_{Q}(f)}{b}\geq\epsilon_{\mu}(Q)$.
And if $\mu(f)>\mu_{Q}(f),$ then $\epsilon_{\mu}(f)>\epsilon_{\mu}(Q)$. }{\LARGE \par}\end{proof}
\begin{prop}
{\LARGE{}\label{prop:-irreducible-in} The polynomial $Q$ is $\mu_{Q}$-irreducible. }{\LARGE \par}\end{prop}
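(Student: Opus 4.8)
The plan is to use Remark \ref{transc}, which already tells us that $\mathrm{in}_{\mu_Q}Q$ is transcendental over the subring $G_{<\alpha}\subset\mathrm{gr}_{\mu_Q}K[x]$, where $\alpha=\deg_x Q$, and that $\mathrm{gr}_{\mu_Q}K[x]=G_{<\alpha}[\mathrm{in}_{\mu_Q}Q]$. So $\mathrm{in}_{\mu_Q}Q$ is an irreducible (indeed prime) element of the polynomial ring $G_{<\alpha}[\mathrm{in}_{\mu_Q}Q]$. What remains is to translate this irreducibility of the initial form into the $\mu_Q$-irreducibility of $Q$ in the sense of the definition in \S\ref{VML}, namely: if $Q\mid_{\mu_Q}gh$ then $Q\mid_{\mu_Q}g$ or $Q\mid_{\mu_Q}h$.

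First I would reduce, using the defining property of $\mu_Q$ and Proposition \ref{rem:prod}, to the case where $g$ and $h$ are replaced by $\mathrm{In}_Q g$ and $\mathrm{In}_Q h$, so that $\mathrm{in}_{\mu_Q}g$ and $\mathrm{in}_{\mu_Q}h$ are honestly represented by $Q$-expansions all of whose terms have the same $\mu$-value. Then $\mathrm{in}_{\mu_Q}(gh)=\mathrm{in}_{\mu_Q}g\cdot\mathrm{in}_{\mu_Q}h$ in $\mathrm{gr}_{\mu_Q}K[x]$. The hypothesis $Q\mid_{\mu_Q}gh$ says $\mathrm{in}_{\mu_Q}Q$ divides $\mathrm{in}_{\mu_Q}g\cdot\mathrm{in}_{\mu_Q}h$ in $\mathrm{gr}_{\mu_Q}K[x]=G_{<\alpha}[\mathrm{in}_{\mu_Q}Q]$. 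Since $\mathrm{in}_{\mu_Q}Q$ is transcendental over $G_{<\alpha}$, the element $\mathrm{in}_{\mu_Q}Q=T$ is a prime of the polynomial ring $G_{<\alpha}[T]$ (here I would note that divisibility by the indeterminate $T$ in a polynomial ring $A[T]$ over any commutative ring $A$ with $1$ behaves well: $T\mid PQ$ in $A[T]$ forces, after looking at constant terms, $T\mid P$ or $T\mid Q$ provided $A$ has no nontrivial zero divisors, and one should check $G_{<\alpha}$ is a domain — it embeds in $\mathrm{gr}_\mu K[x]$ by Remark \ref{transc}, hence is a domain since $\mu$ is a valuation). Therefore $\mathrm{in}_{\mu_Q}Q\mid\mathrm{in}_{\mu_Q}g$ or $\mathrm{in}_{\mu_Q}Q\mid\mathrm{in}_{\mu_Q}h$ in $\mathrm{gr}_{\mu_Q}K[x]$, which is exactly $Q\mid_{\mu_Q}g$ or $Q\mid_{\mu_Q}h$. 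Undoing the reduction to initial forms finishes the argument.

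The one technical point to be careful about — and the place I expect most of the bookkeeping to live — is the passage from $g,h$ to $\mathrm{In}_Q g,\mathrm{In}_Q h$ and the verification that $\mathrm{in}_{\mu_Q}g\mapsto\mathrm{in}_\mu g$-type identifications are being used consistently; in particular that multiplicativity $\mathrm{in}_{\mu_Q}(gh)=\mathrm{in}_{\mu_Q}g\cdot\mathrm{in}_{\mu_Q}h$ holds, which is just the statement that $\mu_Q$ is a valuation (the previous Proposition) so that $\mathrm{gr}_{\mu_Q}K[x]$ is a graded domain and initial forms are multiplicative whenever the product of the relevant initial forms is nonzero. Alternatively — and this may be the cleanest route — I could invoke Remark \ref{UFD}: since $\mathrm{gr}_{\mu_Q}K[x]=G_{<\alpha}[\mathrm{in}_{\mu_Q}Q]$ is a polynomial ring over the domain $G_{<\alpha}$ (and $G_{<\alpha}$, embedding in $\mathrm{gr}_\mu K[x]$, is a domain whose homogeneous elements have the requisite factorization — one can reduce to the rank-one, and then to a polynomial-ring-over-a-field, situation, or simply observe that one only needs unique factorization for the particular homogeneous elements at hand), every homogeneous element has a unique factorization into irreducibles, so $\mu_Q$-irreducibility of $Q$ is equivalent to irreducibility of $\mathrm{in}_{\mu_Q}Q$, which is already asserted in Remark \ref{transc}. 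I would present the direct divisibility argument as the main proof and mention the Remark \ref{UFD} shortcut as an alternative, to avoid having to pin down the precise factorization hypotheses on $G_{<\alpha}$.
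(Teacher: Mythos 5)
Your proof is correct and rests on the same idea as the paper's: by Remark \ref{transc}, $\mathrm{gr}_{\mu_{Q}}K[x]=G_{<\alpha}[\mathrm{in}_{\mu_{Q}}Q]$ with $\mathrm{in}_{\mu_{Q}}Q$ transcendental over $G_{<\alpha}$, so $\mathrm{in}_{\mu_{Q}}Q$ is a prime element of this polynomial ring and $\mu_{Q}$-irreducibility follows from multiplicativity of initial forms (i.e.\ from $\mu_{Q}$ being a valuation). The only cosmetic difference is that you argue directly in $G_{<\alpha}[\mathrm{in}_{\mu_{Q}}Q]$ using that the indeterminate is prime over the domain $G_{<\alpha}$, whereas the paper passes to $L[\mathrm{in}_{\mu_{Q}}Q]$ with $L=\mathrm{Frac}(G_{<\alpha})$ and descends by homogeneity; your reduction to $\mathrm{In}_{Q}g,\mathrm{In}_{Q}h$ is harmless but not needed.
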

\begin{proof}
{\LARGE{}Put $L:=\mathrm{Frac}(G_{<\alpha})$ where $\alpha=\deg(Q)$.
Assume that 
\[
(\mathrm{in}_{\mu_{Q}}Q)(\mathrm{in}_{\mu_{Q}}c)=(\mathrm{in}_{\mu_{Q}}g)(\mathrm{in}_{\mu_{Q}}h)\in L[\mathrm{in}_{\mu_{Q}}Q].
\]
Then there exists $\lambda\in L^{*}$, such that $\mathrm{in}_{\mu_{Q}}Q=\lambda\mathrm{in}_{\mu_{Q}}g$
or $\mathrm{in}_{\mu_{Q}}Q=\lambda\mathrm{in}_{\mu_{Q}}h$. Since
all of $\mathrm{in}_{\mu_{Q}}Q$, $\mathrm{in}_{\mu_{Q}}c$, $\mathrm{in}_{\mu_{Q}}g$,
$\mathrm{in}_{\mu_{Q}}h$ are homogeneuos elements of $G_{<\alpha}[\mathrm{in}_{\mu_{Q}}Q]$,
so is $\lambda$. This proves that $Q$ is $\mu_{Q}$-irreducible. }{\LARGE \par}\end{proof}
\begin{prop}
{\LARGE{}\label{justepour102}Let $Q$ and $Q'$ be abstract key polynomials
such that $\epsilon_{\mu}(Q)\leq\epsilon_{\mu}(Q')$ and let $f\in K[x]$.}{\LARGE \par}

{\LARGE{}Then $\mu_{Q}(f)\leq\mu_{Q'}(f)$. If $\mu_{Q}(f)=\mu(f)$,
then $\mu_{Q'}(f)=\mu(f)$. }{\LARGE \par}\end{prop}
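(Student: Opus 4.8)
The plan is to reduce everything to the case of a single monomial $f_j Q^j$ in the $Q$-expansion of $f$, and then to compare the $Q$-expansion of such a monomial with its $Q'$-expansion using the hypothesis $\epsilon_\mu(Q)\le\epsilon_\mu(Q')$, which forces $\deg Q\le\deg Q'$ (since $Q'$ is an abstract key polynomial: if $\deg Q>\deg Q'$ then $\epsilon_\mu(Q)\ge\epsilon_\mu(Q')$ would contradict the defining property of $Q'$ applied to $f=Q$). First I would observe that both $\mu_Q$ and $\mu_{Q'}$ are valuations (Proposition before Remark \ref{transc}), so it suffices to prove $\mu_Q(f_jQ^j)\le\mu_{Q'}(f_jQ^j)$ for each term of the $Q$-expansion of $f$, because $\mu_{Q'}(f)\ge\min_j\mu_{Q'}(f_jQ^j)\ge\min_j\mu_Q(f_jQ^j)$, and one can further reduce to $\mathrm{In}_Q(f)$ to upgrade the inequality, although for the first assertion the term-by-term bound already suffices.

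The core estimate is $\mu_Q(g)\le\mu_{Q'}(g)$ for an arbitrary $g\in K[x]$; I would prove this by induction on $\deg_x g$. If $\deg_x g<\deg Q\le\deg Q'$ then $\mu_Q(g)=\mu(g)=\mu_{Q'}(g)$ and there is nothing to prove. If $\deg_x g\ge\deg Q$, write the $Q$-expansion $g=\sum_{j=0}^s g_jQ^j$ with $s\ge1$; by additivity of $\mu_{Q'}$ it is enough to show $\mu_Q(g_jQ^j)=\mu(g_j)+j\mu_Q(Q)\le\mu_{Q'}(g_j)+j\mu_{Q'}(Q)=\mu_{Q'}(g_jQ^j)$. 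Since $\deg_x g_j<\deg Q\le\deg Q'$ we have $\mu_{Q'}(g_j)=\mu(g_j)=\mu_Q(g_j)$, so the claim collapses to the single inequality $\mu_Q(Q)\le\mu_{Q'}(Q)$, i.e. $\mu(Q)\le\mu_{Q'}(Q)$. This last point is where the hypothesis on $\epsilon_\mu$ must really be used: writing the $Q'$-expansion $Q=\sum q_iQ'^i$ with $\deg_x q_i<\deg Q'$, one has $\mu_{Q'}(Q)=\min_i(\mu(q_i)+i\mu_{Q'}(Q'))$, so I need $\mu(Q)\le\mu(q_i)+i\mu(Q')$ whenever $i\ge1$ together with $\mu_{Q'}(Q')=\mu(Q')$; the inequality $\mu(Q)\le\mu(q_i)+i\mu_{Q'}(Q')$ for the leading $i$ follows because $Q-q_iQ'^i$ has smaller degree and one can iterate, but the clean way is: by Corollary \ref{corcadeftt} and Proposition \ref{THEprop}, if $\mu(Q)>\mu_{Q'}(Q)$ then $\epsilon_\mu(Q)>\epsilon_\mu(Q')$, contradicting the hypothesis. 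Indeed, $S_{Q'}(Q)\ne\{0\}$ (as $\deg_x Q\ge\deg_x Q'$ forces $s\ge1$ in the $Q'$-expansion of $Q$, and the top term $Q'^s$ lies in $S_{Q'}(Q)$), so Proposition \ref{THEprop} gives $b$ with $\frac{\mu_{Q'}(Q)-\mu_{Q'}(\partial_bQ)}{b}=\epsilon_\mu(Q')$; if moreover $\mu(Q)>\mu_{Q'}(Q)$, the second assertion of Corollary \ref{corcadeftt} (applied with $Q'$ in the role of the key polynomial and $f=Q$, noting $\mu_{Q'}(\partial_bQ)=\mu(\partial_bQ)$ can be arranged by choosing $b$ from that proposition's construction, which uses only derivatives of the lower-degree coefficients) yields $\epsilon_\mu(Q)>\epsilon_\mu(Q')$, a contradiction. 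Hence $\mu(Q)=\mu_{Q'}(Q)$ when $\mu_{Q'}$ is built from $Q'$ with $\epsilon_\mu(Q)\le\epsilon_\mu(Q')$; actually we only need $\le$, i.e. $\mu(Q)\le\mu_{Q'}(Q)$, which is the contrapositive of the strict statement, so this suffices.

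For the second assertion, suppose $\mu_Q(f)=\mu(f)$; I want $\mu_{Q'}(f)=\mu(f)$. From the first part $\mu(f)=\mu_Q(f)\le\mu_{Q'}(f)\le\mu(f)$, where the last inequality $\mu_{Q'}(f)\le\mu(f)$ holds for any abstract key polynomial because $\mu_{Q'}$ is, by construction, a minimum of $\mu$-values of products $f_iQ'^i$ and in particular $\mu_{Q'}(f)\le\mu(f_0+f_1Q'+\cdots)=\mu(f)$ once one checks $\mu_{Q'}$ never exceeds $\mu$ — this follows from Proposition \ref{rem:prod}, which guarantees that the $Q'$-expansion of $f$ is ``value-compatible'' with $\mu$ in the sense that $\mu_{Q'}(f)\le\mu(f)$ always. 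Chaining the inequalities forces equality throughout, proving $\mu_{Q'}(f)=\mu(f)$.

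The main obstacle I anticipate is the pivotal inequality $\mu(Q)\le\mu_{Q'}(Q)$: one must be careful that in applying Proposition \ref{THEprop} and Corollary \ref{corcadeftt} with $Q'$ playing the role of the abstract key polynomial, the integer $b$ produced is exactly the one for which $\mu_{Q'}(\partial_bQ)=\mu(\partial_bQ)$ holds (this is visible from the explicit construction $b=p^e b(Q')$ in the proof of Proposition \ref{THEprop}, since the relevant term $urQ^{l-p^e}$ there has $\mu_{Q'}$-value equal to its $\mu$-value by Proposition \ref{rem:prod}); getting this bookkeeping right, rather than any conceptual difficulty, is the delicate point.
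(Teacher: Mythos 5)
Your proposal follows the paper's own proof essentially step for step: everything is reduced to the single equality $\mu_{Q'}(Q)=\mu(Q)$, obtained by contradiction from Proposition \ref{THEprop} and Corollary \ref{corcadeftt}; the term-by-term comparison of the $Q$-expansion (using $\mu_{Q'}(f_j)=\mu(f_j)$ for $\deg f_j<\deg Q\le\deg Q'$) and the sandwich $\mu(f)=\mu_{Q}(f)\le\mu_{Q'}(f)\le\mu(f)$ for the second assertion are exactly the paper's argument. Two of your local justifications are wrong as stated, though neither affects the architecture. First, $\deg Q\le\deg Q'$ follows by applying the defining property of the abstract key polynomial $Q$ to $f=Q'$: since $\epsilon_{\mu}(Q')\ge\epsilon_{\mu}(Q)$, one gets $\deg Q'\ge\deg Q$. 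Your parenthetical applies the property of $Q'$ to $f=Q$, which yields no contradiction. Second, your claim that the top term of the $Q'$-expansion of $Q$ lies in $S_{Q'}(Q)$ is false in general, and it leans on $\deg_x Q\ge\deg_x Q'$, the opposite of the inequality you have just established; the correct deduction of $S_{Q'}(Q)\ne\{0\}$ comes from the contradiction hypothesis $\mu_{Q'}(Q)<\mu(Q)$ itself, since $S_{Q'}(Q)=\{0\}$ would force $\mu(Q)=\mu(q_0)=\mu_{Q'}(Q)$ by the ultrametric inequality. Finally, the hypothesis $\mu_{Q'}(\partial_b Q)=\mu(\partial_b Q)$ of Corollary \ref{corcadeftt}, which you try to secure via the explicit $b=p^{e}b(Q')$ bookkeeping, holds for a much simpler reason in the only case where the contradiction argument is needed (namely $\deg Q=\deg Q'$): every $\partial_b Q$ then has degree strictly less than $\deg Q'$, so it coincides with its own $Q'$-expansion.
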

\begin{proof}
{\LARGE{}First, we show that $\mu_{Q'}(Q)=\mu(Q)$. If $\deg_{x}(Q)<\deg_{x}(Q')$,
this is clear. Otherwise, we have $\deg_{x}(Q)=\deg_{x}(Q')$, since
$Q$ is an abstract key polynomial and $\epsilon_{\mu}(Q)\leq\epsilon_{\mu}(Q')$.
Let us suppose that $\mu_{Q'}(Q)<\mu(Q)$. Then $S_{Q'}(Q)\neq\left\{ 0\right\} $.
In view of Proposition \ref{THEprop} and Corollary \ref{corcadeftt},
we have $\epsilon_{\mu}(Q)>\epsilon_{\mu}(Q')$, which is a contradiction.}{\LARGE \par}

{\LARGE{}Now let $f=\sum\limits _{j=0}^{s}f_{j}Q^{j}$ be the $Q$-expansion
of $f$. For each integer $j\in\left\{ 0,\ldots,s\right\} $, we have
\[
\mu_{Q'}(f_{j}Q^{j})=\mu_{Q'}(f_{j})+j\mu_{Q'}(Q)=\mu_{Q'}(f_{j})+j\mu(Q).
\]
Then, since $\deg_{x}(f_{j})<\deg_{x}(Q)\leq\deg_{x}(Q')$, we have
$\mu_{Q'}(f_{j}Q^{j})=\mu(f_{j})+j\mu(Q)=\mu(f_{j}Q^{j})$. Hence
$\mu_{Q'}(f)\geq\min\limits _{0\leq j\leq s}\left\{ \mu_{Q'}(f_{j}Q^{j})\right\} =\min\limits _{0\leq j\leq s}\left\{ \mu(f_{j}Q^{j})\right\} =\mu_{Q}(f)$.}{\LARGE \par}

{\LARGE{}Let us now suppose that $\mu_{Q}(f)=\mu(f)\leq\mu_{Q'}(f)$.
As we know that $\mu_{Q'}(f)\leq\mu(f)$, we obtain 
\[
\mu_{Q}(f)=\mu_{Q'}(f),
\]
as desired. }{\LARGE \par}\end{proof}
\begin{prop}
{\LARGE{}\label{Proposition10.2} Let $f_{1},\ldots,f_{r}\in K[x]$
be polynomials and let $n:=\max\limits _{1\leq i\leq r}\left\{ \deg_{x}(f_{i})\right\} $.}{\LARGE \par}

{\LARGE{}Then there exists an abstract key polynomial $Q$ of degree
less than or equal to $n$ such that for each integer $i\in\left\{ 1,\ldots,r\right\} $,
we have $\mu_{Q}(f_{i})=\mu(f_{i})$. }{\LARGE \par}\end{prop}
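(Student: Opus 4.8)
The plan is to argue by induction on $n=\max_{1\le i\le r}\deg_x(f_i)$, reducing control of the $f_i$ to control of their formal derivatives. At the outset I would discard every $f_i$ that is zero (it satisfies $\mu_Q(f_i)=\mu(f_i)=\infty$ for every $Q$) or constant (its $Q$-expansion is itself, so $\mu_Q(f_i)=\mu(f_i)$ for every monic $Q$ of positive degree); thus I may assume every $f_i$ is non-constant, so that $n\ge1$, the case $n=0$ being vacuous. I then set $\epsilon^\ast:=\max_{1\le i\le r}\epsilon_\mu(f_i)\in\Gamma\otimes\mathbb Q$.

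Next I would manufacture an auxiliary abstract key polynomial $Q_0$ that ``takes care of all the derivatives''. Every nonzero $\partial_b f_i$ ($b\ge1$) has degree at most $n-1$, so by the inductive hypothesis applied to the finite family $\{\partial_b f_i\}$ (when $n=1$ these are all constants and one simply takes $Q_0=x$, which is an abstract key polynomial with $\mu_{Q_0}(c)=\mu(c)$ for every constant $c$) I obtain an abstract key polynomial $Q_0$ of degree $\le n$ with $\mu_{Q_0}(\partial_b f_i)=\mu(\partial_b f_i)$ for all $i$ and all $b\ge1$ with $\partial_b f_i\ne0$. Now I set $\epsilon^{\ast\ast}:=\max\{\epsilon^\ast,\epsilon_\mu(Q_0)\}$ and let $Q$ be a monic scalar multiple of a polynomial of least degree among all $g\in K[x]$ with $\epsilon_\mu(g)\ge\epsilon^{\ast\ast}$ (recall $\epsilon_\mu$ is unchanged by multiplication by an element of $K^\ast$). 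Such a $g$ exists and is non-constant (constants have $\epsilon_\mu=-\infty$), and its degree is at most $n$, as is witnessed either by an $f_i$ realising $\epsilon^\ast$ or by $Q_0$; so $1\le\deg Q\le n$. By the minimality of its degree, $Q$ is an abstract key polynomial: any $h$ with $\epsilon_\mu(h)\ge\epsilon_\mu(Q)$ then has $\epsilon_\mu(h)\ge\epsilon^{\ast\ast}$ and hence $\deg h\ge\deg Q$.

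It remains to verify that $\mu_Q(f_i)=\mu(f_i)$ for each $i$. Since always $\mu_Q(f_i)\le\mu(f_i)$, I would suppose $\mu_Q(f_i)<\mu(f_i)$ and derive a contradiction. First, $S_Q(f_i)\neq\{0\}$: if it were $\{0\}$, the term of least value in the $Q$-expansion of $f_i$ would be its constant term, all other terms having strictly larger value, and this forces $\mu(f_i)=\mu_Q(f_i)$. By Proposition \ref{THEprop} there is $b\in\mathbb N^\ast$ with $\mu_Q(\partial_b f_i)=\mu_Q(f_i)-b\,\epsilon_\mu(Q)$; this value is finite, so $\partial_b f_i\neq0$ and $1\le b\le\deg f_i\le n$. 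By construction of $Q_0$ we have $\mu_{Q_0}(\partial_b f_i)=\mu(\partial_b f_i)$, and since $Q_0$ and $Q$ are abstract key polynomials with $\epsilon_\mu(Q_0)\le\epsilon^{\ast\ast}\le\epsilon_\mu(Q)$, Proposition \ref{justepour102} yields $\mu_Q(\partial_b f_i)=\mu(\partial_b f_i)$. Now the hypotheses of Corollary \ref{corcadeftt} hold for $f_i$ and $Q$, and as $\mu(f_i)>\mu_Q(f_i)$ its ``moreover'' clause gives $\epsilon_\mu(f_i)>\epsilon_\mu(Q)\ge\epsilon^\ast\ge\epsilon_\mu(f_i)$, which is absurd. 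Hence $\mu_Q(f_i)=\mu(f_i)$ for every $i$, and $Q$ is the desired abstract key polynomial of degree at most $n$.

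The crux — and the reason the naive attempt ``take $Q$ with $\epsilon_\mu(Q)\ge\max_i\epsilon_\mu(f_i)$'' does not work by itself — is that Corollary \ref{corcadeftt}, the tool that turns $\mu_Q(f_i)<\mu(f_i)$ into $\epsilon_\mu(f_i)>\epsilon_\mu(Q)$, needs the additional input $\mu_Q(\partial_b f_i)=\mu(\partial_b f_i)$. Providing this is precisely what the induction on degree is for: the derivatives have strictly smaller degree, so the inductive hypothesis gives $Q_0$ with $\mu_{Q_0}(\partial_b f_i)=\mu(\partial_b f_i)$, and Proposition \ref{justepour102} then transports that equality to any abstract key polynomial $Q$ with $\epsilon_\mu(Q)\ge\epsilon_\mu(Q_0)$. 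Everything else I expect to be routine: the inequality $\mu_Q\le\mu$ on $K[x]$, the implication $S_Q(f_i)=\{0\}\Rightarrow\mu_Q(f_i)=\mu(f_i)$, the scaling-invariance of $\epsilon_\mu$, and the bookkeeping surrounding the base case and the discarded zero or constant $f_i$.
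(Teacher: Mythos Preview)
Your argument is correct and rests on the same mechanism as the paper's: first arrange that $\mu$ and the truncation agree on all derivatives $\partial_b f_i$, then use Proposition~\ref{THEprop} together with Corollary~\ref{corcadeftt} to convert a hypothetical strict inequality $\mu_Q(f_i)<\mu(f_i)$ into $\epsilon_\mu(f_i)>\epsilon_\mu(Q)$, contradicting the choice of $Q$. The organization is different. The paper first reduces to $r=1$ via Proposition~\ref{justepour102}, then argues by contradiction on a counterexample $f$ of minimal degree; the minimality of $\deg_x f$ plays the role of your inductive hypothesis (it guarantees, for each $\partial_j f$, an abstract key polynomial on which $\mu$ and its truncation agree, and Proposition~\ref{justepour102} consolidates these into one $Q$). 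The paper then shows $\epsilon_\mu(f)>\epsilon_\mu(Q')$ for \emph{every} abstract key polynomial $Q'$ of degree at most $\deg_x f$, and finishes by invoking the existence of such a $Q'$ with $\epsilon_\mu(Q')\ge\epsilon_\mu(f)$ --- exactly the polynomial you build explicitly as the monic polynomial of least degree with $\epsilon_\mu\ge\epsilon^{\ast\ast}$. So your proof is a direct, constructive repackaging of the paper's contradiction argument: you front-load the construction of $Q$ and avoid the detour through ``every $Q'$'', at the price of tracking both $\epsilon^\ast$ and $\epsilon_\mu(Q_0)$ simultaneously. Neither approach gains or loses anything of substance.
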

\begin{proof}
{\LARGE{}First, we show that it is sufficient to prove the Proposition
for $r=1$.}{\LARGE \par}

{\LARGE{}Indeed, suppose the Proposition proved when there is just
one polynomial and suppose $r>1$. Hence we can find $Q_{1},\ldots,Q_{r}$
abstract key polynomials of degrees less or equal than $n$ such that
for each integer $i\in\left\{ 1,\ldots,r\right\} $, we have $\mu_{Q_{i}}(f_{i})=\mu(f_{i})$.}{\LARGE \par}

{\LARGE{}Renumbering the $Q_{i}$, if necesssary, we may assume that
$\epsilon_{\mu}(Q_{r})\geq\epsilon_{\mu}(Q_{i})$ for every integer
$i\in\left\{ 1,\ldots,r\right\} $. By Proposition \ref{justepour102},
we have, for each $i\in\left\{ 1,\ldots,r\right\} $, $\mu_{Q_{r}}(f_{i})=\mu(f_{i})$.}{\LARGE \par}

{\LARGE{}Let us show the case $r=1$. We argue by contradiction. Suppose
that there exists a polynomial $f$ such that for every abstract key
polynomial $Q$ of degree less than or equal to $\deg_{x}(f)$, we
have $\mu_{Q}(f)<\mu(f)$. Choose $f$ of minimal degree among the
polynomials having this property. }{\LARGE \par}
\begin{claim*}
{\LARGE{}There exists an abstract key polynomial $Q$ of degree less
than or equal to $\deg_{x}f$ such that 
\[
\mu_{Q}(\partial_{b}f)=\mu(\partial_{b}f)
\]
for every $b\in\mathbb{N}^{\ast}$.}{\LARGE \par}

{\LARGE{}Indeed, let $s=\deg_{x}f$, so that for each integer $j$
strictly greater than $s$, we have $\partial_{j}f=0$. By the minimality
assumption on $\deg_{x}f$, for each $i\in\{1,\dots,s\}$ there exists
an abstract key polynomial $Q_{i}$ such that $\mu_{Q_{i}}(\partial_{j}f)=\mu(\partial_{j}f)$,.}{\LARGE \par}

{\LARGE{}Take an $i\in\{1,\dots,s\}$ such that $\epsilon_{\mu}(Q_{i})=\max\limits _{1\leq j\leq s}\left\{ \epsilon_{\mu}(Q_{j})\right\} $.
Then, in view of Proposition \ref{justepour102}, for each integer
$1\leq j\leq s$, we have $\mu_{Q_{i}}(\partial_{j}f)=\mu(\partial_{j}f)$,
and the Claim follows. }{\LARGE \par}
\end{claim*}
{\LARGE{}Now, we have $\mu_{Q}(f)<\mu(f)$, so in particular $S_{Q}(f)\neq\left\{ 0\right\} $,
and for each $b\in\mathbb{N}^{\ast}$, $\mu_{Q}(\partial_{b}f)=\mu(\partial_{b}f)$.
In view of Proposition \ref{THEprop} and Corollary \ref{corcadeftt},
we have 
\begin{equation}
\epsilon_{\mu}(f)>\epsilon_{\mu}(Q).\label{eq:strictepsilon}
\end{equation}
We claim that the last inequality is true for every abstract key polynomial
of degree less than or equal to $\deg_{x}f$.}{\LARGE \par}

{\LARGE{}Indeed, let us take $Q'$ an abstract key polynomial of degree
less than or equal to $\deg_{x}f$. We have two cases.}{\LARGE \par}

{\LARGE{}First case: $\epsilon_{\mu}(Q')\le\epsilon_{\mu}(Q)$. In
view of (\ref{eq:strictepsilon}), we have $\epsilon_{\mu}(Q')\le\epsilon_{\mu}(Q)<\epsilon_{\mu}(f)$.}{\LARGE \par}

{\LARGE{}Second case: $\epsilon_{\mu}(Q)<\epsilon_{\mu}(Q')$. In
view of Proposition \ref{justepour102}, we have $\mu(\partial_{b}f)=\mu_{Q}(\partial_{b}f)=\mu_{Q'}(\partial_{b}f)$
for each strictly positive integer $b$. Since $\mu_{Q'}(f)<\mu(f)$,
arguing as before, we have $\epsilon_{\mu}(Q')<\epsilon_{\mu}(f)$.}{\LARGE \par}

{\LARGE{}By definition of the abstract key polynomials, there exists
an abstract key polynomial $Q'$ of degree less than or equal to $\deg_{x}f$
such that $\epsilon_{\mu}(f)\le\epsilon_{\mu}(Q')$. This is a contradiction. }{\LARGE \par}
\end{proof}

\section{The relationship between the abstract and the Mac Lane\textendash Vaqui\'e
key polynomials.}

{\LARGE{}\label{relationship}}{\LARGE \par}

{\LARGE{}The aim of this section is to study the relationship between
the abstract and the Mac Lane\textendash Vaqui\'e key polynomials. }{\LARGE \par}
\begin{defn}
{\LARGE{}Let $Q$ and $Q'$ be two abstract key polynomials such that
$\epsilon_{\mu}(Q)<\epsilon_{\mu}(Q')$. We say that $Q'$ is an }\textbf{\LARGE{}immediate
successor}{\LARGE{} of $Q$ and we write $Q<Q'$ if $\deg_{x}(Q')$
is minimal among all the $Q'$ which satisfy $\epsilon_{\mu}(Q)<\epsilon_{\mu}(Q')$. }{\LARGE \par}\end{defn}
\begin{thm}
{\LARGE{}\label{abstractimpliesVaquie} Let $Q$ be an abstract key
polynomial for $\mu$. Then $Q$ is a Mac Lane \textendash{} Vaqui\'e
key polynomial for $\mu_{Q}$. }{\LARGE \par}\end{thm}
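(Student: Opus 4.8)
The plan is to check directly the two conditions in the definition of a Mac Lane--Vaqui\'e key polynomial for $\mu_Q$ (which is a valuation, since $Q$ is an abstract key polynomial): namely that $Q$ is $\mu_Q$-irreducible and that $Q$ is $\mu_Q$-minimal. Monicity of $Q$ is part of the hypothesis, since abstract key polynomials are monic by definition. The $\mu_Q$-irreducibility is already in hand: it is exactly the statement of Proposition \ref{prop:-irreducible-in}. So the whole content of the theorem is the $\mu_Q$-minimality of $Q$, that is: if $f\in K[x]$ is nonzero and $Q\mid_{\mu_Q}f$, then $\deg(f)\ge\deg(Q)$.

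For the minimality I would argue by contraposition. Suppose $f\neq0$ and $\deg(f)<\deg(Q)=:\alpha$; I claim $Q\nmid_{\mu_Q}f$. Since $\deg(f)<\alpha$, the $Q$-expansion of $f$ reduces to $f$ itself, so $\mu_Q(f)=\mu(f)$ and, directly from the definition of $G_{<\alpha}$ in Remark \ref{transc} (take the coefficient $\mathrm{in}_\mu(1)=1$), we get $\mathrm{in}_{\mu_Q}f\in G_{<\alpha}$. Now $G_{<\alpha}$ is a domain, because by Remark \ref{transc} it embeds into $\mathrm{gr}_\mu K[x]$, which is a domain since $\mu$ is a valuation; and again by Remark \ref{transc} we have $\mathrm{gr}_{\mu_Q}K[x]=G_{<\alpha}[\mathrm{in}_{\mu_Q}Q]$ with $\mathrm{in}_{\mu_Q}Q$ transcendental over $G_{<\alpha}$. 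Hence $\mathrm{gr}_{\mu_Q}K[x]$ is a polynomial ring in one variable over a domain, in which degrees in the variable $\mathrm{in}_{\mu_Q}Q$ are additive. If we had $Q\mid_{\mu_Q}f$, we could write $\mathrm{in}_{\mu_Q}f=(\mathrm{in}_{\mu_Q}Q)\,c$ with $c\in\mathrm{gr}_{\mu_Q}K[x]$, and $c\neq0$ since $\mathrm{in}_{\mu_Q}f\neq0$; comparing degrees, the left-hand side has degree $0$ while the right-hand side has degree $1+\deg(c)\ge1$, a contradiction. Therefore $Q\nmid_{\mu_Q}f$, which proves that $Q$ is $\mu_Q$-minimal.

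Combining the two parts gives the theorem. I do not expect a genuine obstacle here: once Remark \ref{transc} and Proposition \ref{prop:-irreducible-in} are available, the argument is essentially bookkeeping about the graded ring $\mathrm{gr}_{\mu_Q}K[x]$. The only two points worth stating carefully are that a polynomial of degree $<\deg(Q)$ has its $\mu_Q$-initial form lying in $G_{<\alpha}$ --- i.e. is of ``degree $0$'' in $\mathrm{in}_{\mu_Q}Q$ --- and that $G_{<\alpha}$ is a domain, so that degree arguments in $G_{<\alpha}[\mathrm{in}_{\mu_Q}Q]$ are legitimate.
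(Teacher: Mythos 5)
Your proposal is correct and follows essentially the same route as the paper: irreducibility is quoted from Proposition \ref{prop:-irreducible-in}, and minimality is obtained from Remark \ref{transc} by comparing degrees in the variable $\mathrm{in}_{\mu_Q}Q$ inside $G_{<\alpha}[\mathrm{in}_{\mu_Q}Q]=\mathrm{gr}_{\mu_Q}K[x]$. The only difference is cosmetic: you argue the minimality contrapositively (a polynomial of degree $<\deg Q$ has initial form of $\mathrm{in}_{\mu_Q}Q$-degree $0$), whereas the paper argues directly that $Q\mid_{\mu_Q}r$ forces $\deg_{\mathrm{in}_{\mu_Q}Q}(\mathrm{in}_{\mu_Q}r)\geq1$ and hence $\deg_x r\geq\deg_x Q$ via the $Q$-expansion of $r$.
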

\begin{proof}
{\LARGE{}We have to prove two things:}{\LARGE \par}

{\LARGE{}1. $Q$ is $\mu_{Q}$-irreducible.}{\LARGE \par}

{\LARGE{}2. $Q$ is $\mu_{Q}$-minimal.}{\LARGE \par}

{\LARGE{}Statement 1 is nothing but Proposition \ref{prop:-irreducible-in}.}{\LARGE \par}

{\LARGE{}Now we are going to show the statement 2. We assume that
$Q\ |_{\mu_{Q}}r$, We want to show that $\deg_{x}r\geq\deg_{x}Q$.}{\LARGE \par}

{\LARGE{}By assumption, there exists $c$ such that 
$$\mathrm{(in_{\mu_{Q}}Q)(}\mathrm{in_{\mu_{Q}}c)=\mathrm{in}_{\mu_{Q}}r\in}\mathrm{gr}_{\mu_{Q}}K[x]\subset G_{<\alpha}[\mathrm{in}_{\mu_{Q}}Q]\subset L[\mathrm{in}_{\mu_{Q}}Q]$$.
Since $\mathrm{in}_{\mu_{Q}}Q$ is transcendental over $L$, we have
\begin{equation}
\deg_{\mathrm{in}_{\mu_{Q}}Q}\mathrm{(in}_{\mu_{Q}}r)\geq1.\label{eq:degree>1}
\end{equation}
Let $r=\sum\limits _{j=0}^{n}r_{j}Q^{j}$ be the $Q$-expansion of
$r$. By the algebraic independence of $\mathrm{in}_{\mu_{Q}}Q$ over
$L$ (and hence, }\textit{\LARGE{}a fortiori}{\LARGE{}, over $G_{<\alpha}$),
we have $\mathrm{in}_{\mu_{Q}}r=\sum\limits _{j=0}^{n}\mathrm{in}_{\mu_{Q}}r_{j}\mathrm{in}_{\mu_{Q}}Q^{j}$.
combined with (\ref{eq:degree>1}), this shows that $n\ge1$. We obtain
\[
\deg_{x}r=n\deg_{x}Q+\deg_{x}r_{n}\geq\deg_{x}Q+\deg_{x}r_{n}\geq\deg_{x}Q.
\]
This completes the proof. }{\LARGE \par}\end{proof}
\begin{lem}
{\LARGE{}\label{epsilonimpliesmu} Let $Q$ and $Q'$ be two abstract
key polynomials for $\mu$ such that $\epsilon_{\mu}(Q)<\epsilon_{\mu}(Q')$.
Then 
\[
\mu_{Q}(Q')<\mu(Q').
\]
}{\LARGE \par}\end{lem}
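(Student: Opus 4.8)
The plan is to produce, from the single derivative $\partial_b Q'$ with $b:=b(Q')$, an inequality that already forces $\mu_Q(Q')<\mu(Q')$; no case analysis on degrees or on $Q$-expansions will be needed. The only facts I will invoke are Lemma \ref{inegalite} (which requires $Q$ to be an abstract key polynomial, so that $\mu_Q$ is a valuation) and the trivial bound $\mu_Q(h)\le\mu(h)$, valid for every $h\in K[x]$: indeed, writing the $Q$-expansion $h=\sum_j h_jQ^j$, the ultrametric inequality for $\mu$ gives $\mu(h)\ge\min_j\mu(h_jQ^j)=\mu_Q(h)$.

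Now set $b:=b(Q')\in\mathbb{N}^{\ast}$. By definition of $\epsilon_\mu(Q')$ and of $b(Q')$ one has $\mu(\partial_b Q')=\mu(Q')-b\,\epsilon_\mu(Q')$ (in particular $\partial_b Q'\neq0$). Applying Lemma \ref{inegalite} to the polynomial $Q'$ and this $b$, and then the bound $\mu_Q\le\mu$ to $\partial_b Q'$, I obtain
\[
\mu_Q(Q')-b\,\epsilon_\mu(Q)\ \le\ \mu_Q(\partial_b Q')\ \le\ \mu(\partial_b Q')\ =\ \mu(Q')-b\,\epsilon_\mu(Q').
\]
Rearranging gives $\mu_Q(Q')\le\mu(Q')-b\bigl(\epsilon_\mu(Q')-\epsilon_\mu(Q)\bigr)$, and since $b\ge1$ while $\epsilon_\mu(Q')-\epsilon_\mu(Q)>0$ by hypothesis, the subtracted term is strictly positive; hence $\mu_Q(Q')<\mu(Q')$, which is the assertion.

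I do not anticipate a genuine obstacle. The one point deserving a word of care is that $b(Q')$ is well defined, i.e. that the maximum defining $\epsilon_\mu(Q')$ is attained and on a set where $\partial_b Q'\neq0$: this holds because $Q'$ is monic of some degree $d\ge1$, so $\partial_d Q'=1\neq0$, whence $I(Q')$ is a nonempty finite subset of $\mathbb{N}^{\ast}$. Everything else is the one-line combination of Lemma \ref{inegalite} with the inequality $\mu_Q\le\mu$ displayed above.
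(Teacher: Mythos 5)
Your proof is correct and follows essentially the same route as the paper's: both arguments combine Lemma \ref{inegalite} with the trivial inequality $\mu_{Q}\le\mu$ applied to a derivative of $Q'$. The only difference is cosmetic --- you specialize to $b=b(Q')$ and conclude directly (even obtaining the quantitative bound $\mu_{Q}(Q')\le\mu(Q')-b\left(\epsilon_{\mu}(Q')-\epsilon_{\mu}(Q)\right)$), whereas the paper assumes $\mu_{Q}(Q')=\mu(Q')$ and lets $b$ range over all positive integers to reach the contradiction $\epsilon_{\mu}(Q')\le\epsilon_{\mu}(Q)$.
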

\begin{proof}
{\LARGE{}In view of Lemma \ref{inegalite}, we have $\frac{\mu_{Q}(Q')-\mu_{Q}(\partial_{b}Q')}{b}\leq\epsilon_{\mu}(Q)$
for each strictly positive integer $b$. Assume that $\mu_{Q}(Q')=\mu(Q')$,
aiming for contradiction. Then 
\[
\frac{\mu(Q')-\mu_{Q}(\partial_{b}Q')}{b}\leq\epsilon_{\mu}(Q),
\]
hence $\frac{\mu(Q')-\mu(\partial_{b}Q')}{b}\leq\epsilon_{\mu}(Q)$.
In other words, $\epsilon_{\mu}(Q')\leq\epsilon_{\mu}(Q)$, which
gives the desired contradiction. }{\LARGE \par}\end{proof}
\begin{prop}
{\LARGE{}\label{succimmnoyau} Let $Q$ and $Q'$ be two abstract
key polynomials for $\mu$. The following conditions are equivalent:}{\LARGE \par}

{\LARGE{}(1) $Q<Q'$}{\LARGE \par}

{\LARGE{}(2) $\mu_{Q}(Q')<\mu(Q')$ and $Q'$ is of minimal degree
with respect to this property. }{\LARGE \par}\end{prop}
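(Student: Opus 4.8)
The plan is to prove the two implications separately, using Lemma~\ref{epsilonimpliesmu} and Proposition~\ref{justepour102} as the main tools. For the implication $(1)\Rightarrow(2)$: assume $Q<Q'$, so by definition $\epsilon_\mu(Q)<\epsilon_\mu(Q')$ and $\deg_x(Q')$ is minimal among abstract key polynomials with strictly larger $\epsilon_\mu$. Lemma~\ref{epsilonimpliesmu} immediately gives $\mu_Q(Q')<\mu(Q')$. It remains to check the minimality clause of $(2)$, namely that no abstract key polynomial $Q''$ of smaller degree satisfies $\mu_Q(Q'')<\mu(Q'')$. Suppose such a $Q''$ exists. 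I would want to deduce $\epsilon_\mu(Q)<\epsilon_\mu(Q'')$, which would contradict the degree-minimality in the definition of $Q<Q'$. The point is that $\mu_Q(Q'')<\mu(Q'')$ forces $S_Q(Q'')\neq\{0\}$, and then I want to invoke Proposition~\ref{THEprop} together with Corollary~\ref{corcadeftt}: Proposition~\ref{THEprop} produces $b\in\mathbb N^\ast$ with $\frac{\mu_Q(Q'')-\mu_Q(\partial_b Q'')}{b}=\epsilon_\mu(Q)$, and then, since $\deg_x(\partial_b Q'')<\deg_x(Q'')<\deg_x(Q')$ and (by minimality in the definition of immediate successor, or a direct argument) $\mu_Q(\partial_b Q'')=\mu(\partial_b Q'')$, Corollary~\ref{corcadeftt} applied with $\mu(Q'')>\mu_Q(Q'')$ yields $\epsilon_\mu(Q'')>\epsilon_\mu(Q)$, the desired contradiction.

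For the converse $(2)\Rightarrow(1)$: assume $\mu_Q(Q')<\mu(Q')$ and $Q'$ is of minimal degree with this property. First I would establish $\epsilon_\mu(Q)<\epsilon_\mu(Q')$, by exactly the argument just sketched: $\mu_Q(Q')<\mu(Q')$ gives $S_Q(Q')\neq\{0\}$, Proposition~\ref{THEprop} supplies the relevant $b$, and then (using the minimal-degree hypothesis on $Q'$ to handle $\partial_b Q'$, whose degree is strictly smaller, so $\mu_Q(\partial_b Q')=\mu(\partial_b Q')$) Corollary~\ref{corcadeftt} gives $\epsilon_\mu(Q')>\epsilon_\mu(Q)$. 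So $Q'$ is a candidate in the definition of immediate successor; it remains to see that it has minimal degree among all abstract key polynomials $Q''$ with $\epsilon_\mu(Q)<\epsilon_\mu(Q'')$. But any such $Q''$ satisfies $\mu_Q(Q'')<\mu(Q'')$ by Lemma~\ref{epsilonimpliesmu}, hence $\deg_x(Q'')\geq\deg_x(Q')$ by the minimality hypothesis in $(2)$. This shows $Q<Q'$.

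The main obstacle I anticipate is the bookkeeping around the condition $\mu_Q(\partial_b f)=\mu(\partial_b f)$ needed to invoke Corollary~\ref{corcadeftt}. In the step where I pass from $\mu_Q(Q')<\mu(Q')$ (or $\mu_Q(Q'')<\mu(Q'')$) to a statement about $\epsilon_\mu$, I must ensure the derivatives $\partial_b Q'$ are ``correctly valued'' by $\mu_Q$. Here the minimal-degree hypothesis is exactly what saves me: since $\deg_x(\partial_b Q')<\deg_x(Q')$ and $Q'$ was chosen of minimal degree violating $\mu_Q=\mu$, the polynomial $\partial_b Q'$ does \emph{not} violate it, i.e.\ there is an abstract key polynomial $\tilde Q$ of degree $\le\deg_x(\partial_b Q')$ with $\mu_{\tilde Q}(\partial_b Q')=\mu(\partial_b Q')$, and one still has to transfer this back to $\mu_Q$ itself via Proposition~\ref{justepour102} (comparing $\epsilon_\mu(Q)$ with $\epsilon_\mu(\tilde Q)$, possibly replacing $Q$ by whichever has larger $\epsilon_\mu$). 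Threading this correctly — and confirming that $\mu_Q(\partial_b Q')=\mu(\partial_b Q')$ can indeed be arranged simultaneously for the specific $b$ delivered by Proposition~\ref{THEprop} — is the delicate part; everything else is a direct assembly of the cited results.
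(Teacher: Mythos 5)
Your overall strategy coincides with the paper's: Lemma \ref{epsilonimpliesmu} handles the ``easy'' half of each implication, and the combination of Proposition \ref{THEprop} with Corollary \ref{corcadeftt} converts $\mu_Q(Q')<\mu(Q')$ into $\epsilon_\mu(Q)<\epsilon_\mu(Q')$. The one place where you diverge is exactly the place you flag as delicate, and your proposed repair there does not work as stated. In the implication $(1)\Rightarrow(2)$ you take an arbitrary abstract key polynomial $Q''$ of degree $<\deg_x Q'$ with $\mu_Q(Q'')<\mu(Q'')$ and try to run the Proposition \ref{THEprop} / Corollary \ref{corcadeftt} argument on it directly; this requires $\mu_Q(\partial_b Q'')=\mu(\partial_b Q'')$, and since $Q''$ carries no minimality hypothesis, nothing you cite gives this. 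Your suggested fix --- produce $\tilde Q$ via Proposition \ref{Proposition10.2} with $\mu_{\tilde Q}(\partial_b Q'')=\mu(\partial_b Q'')$ and then transfer back to $\mu_Q$, ``possibly replacing $Q$ by whichever has larger $\epsilon_\mu$'' --- is the wrong move: $Q$ is fixed by the statement and cannot be replaced, and Proposition \ref{justepour102} transfers the equality $\mu_{\tilde Q}(\cdot)=\mu(\cdot)$ over to $\mu_Q$ only when $\epsilon_\mu(\tilde Q)\le\epsilon_\mu(Q)$. The correct way to close this is to rule out $\epsilon_\mu(\tilde Q)>\epsilon_\mu(Q)$: since $\deg_x\tilde Q<\deg_x Q'$, that inequality would contradict the degree-minimality clause in the definition of $Q<Q'$ (and, in the $(2)\Rightarrow(1)$ direction, would contradict the minimality in hypothesis $(2)$ via Lemma \ref{epsilonimpliesmu}); only after that does Proposition \ref{justepour102} apply.

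The paper sidesteps this bookkeeping entirely. It proves $(2)\Rightarrow(1)$ first, where the hypothesis that $Q'$ has minimal degree among polynomials with $\mu_Q<\mu$ gives $\mu_Q(\partial_b Q')=\mu(\partial_b Q')$ for free, because $\deg_x\partial_b Q'<\deg_x Q'$. It then deduces $(1)\Rightarrow(2)$ by taking a putative counterexample $Q''$ of \emph{minimal} degree and applying the already-established implication $(2)\Rightarrow(1)$ to the pair $(Q,Q'')$, obtaining $\epsilon_\mu(Q)<\epsilon_\mu(Q'')$ and contradicting the degree-minimality in the definition of $Q<Q'$. Restructuring your argument this way --- prove $(2)\Rightarrow(1)$ first and feed a minimal-degree counterexample into it --- removes the need for the $\tilde Q$ detour altogether.
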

\begin{proof}
{\LARGE{}(2)$\Longrightarrow$(1). Let us assume that $\mu_{Q}(Q')<\mu(Q')$
and that $Q'$ is of minimal degree minimal for this property. Then
$S_{Q}(Q')\neq\left\{ 0\right\} $ and for each strictly positive
integer $b$, we have $\mu_{Q}(\partial_{b}Q')=\mu(\partial_{b}Q')$.
By Proposition \ref{THEprop}, there exists $b\in\mathbb{N}^{\ast}$
such that $\frac{\mu_{Q}(Q')-\mu_{Q}(\partial_{b}g)}{b}=\epsilon_{\mu}(Q)$.
Hence $\frac{\mu_{Q}(Q')-\mu(\partial_{b}Q')}{b}=\epsilon_{\mu}(Q)$,
and $\epsilon_{\mu}(Q)<\frac{\mu(Q')-\mu(\partial_{b}Q')}{b}\leq\epsilon_{\mu}(Q')$.
If there exists a key polynomial $Q''$ satisfying $\epsilon_{\mu}(Q)<\epsilon_{\mu}(Q'')$
of degree strictly smaller than $\deg_{x}Q'$, by Lemma \ref{epsilonimpliesmu}
we would have $\mu_{Q}(Q'')<\mu(Q'')$, which would contradict the
minimality assumption on the degree of $Q'$. This proves (1).}{\LARGE \par}

{\LARGE{}(1)$\Longrightarrow$(2). Let us assume that $Q<Q'$. By
Lemma \ref{epsilonimpliesmu}, this implies that $\mu_{Q}(Q')<\mu(Q')$.
Moreover, if there existed an abstract key polynomial $Q''$ satisfying
$\mu_{Q}(Q'')<\mu(Q'')$ of degree strictly smaller than $\deg_{x}Q'$,
take such a $Q''$ of minimal degree. By the implication (2)$\Longrightarrow$(1)
of the Proposition we would have $\epsilon_{\mu}(Q)<\epsilon_{\mu}(Q'')$,
which would contradict the minimality assumption on the degree of
$Q'$. This proves (2). }{\LARGE \par}\end{proof}
\begin{thm}
{\LARGE{}\label{successorimpliesVaquie} Let $Q$ and $Q'$ be two
abstract key polynomials for $\mu$ such that $Q<Q'$. Then $Q'$
is a Mac Lane \textendash{} Vaqui\'e key polynomial for $\mu_{Q}$. }{\LARGE \par}\end{thm}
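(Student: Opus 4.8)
The plan is to verify the two defining properties of a Mac Lane--Vaqui\'e key polynomial for $\mu_Q$: that $Q'$ is $\mu_Q$-minimal and $\mu_Q$-irreducible. Write $\alpha:=\deg_xQ$ and $T:=\mathrm{in}_{\mu_Q}Q$; by Theorem \ref{abstractimpliesVaquie} and Remark \ref{transc} we have $\mathrm{gr}_{\mu_Q}K[x]=G_{<\alpha}[T]$ with $T$ transcendental over $G_{<\alpha}$, and set $L:=\mathrm{Frac}(G_{<\alpha})$. By Lemma \ref{epsilonimpliesmu} we have $\mu_Q(Q')<\mu(Q')$, so $S_Q(Q')\neq\{0\}$; put $\delta:=\delta_Q(Q')=\deg_T\mathrm{in}_{\mu_Q}(Q')\ge 1$ and let $s'$ be the top index of the $Q$-expansion $Q'=\sum_{j=0}^{s'}q'_jQ^j$. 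The single device used throughout is the assertion $(\ast)$: \emph{there is no $g\in K[x]$ with $\deg_xg<\deg_xQ'$, $S_Q(g)\neq\{0\}$ and $\mu(g)>\mu_Q(g)$.} Indeed, for such a $g$ Proposition \ref{THEprop} produces $b\in\mathbb{N}^{*}$ with $\frac{\mu_Q(g)-\mu_Q(\partial_bg)}{b}=\epsilon_\mu(Q)$; Proposition \ref{Proposition10.2} gives an abstract key polynomial $\hat Q$ with $\deg_x\hat Q\le\deg_x\partial_bg<\deg_xQ'$ and $\mu_{\hat Q}(\partial_bg)=\mu(\partial_bg)$. If $\epsilon_\mu(\hat Q)>\epsilon_\mu(Q)$ this already contradicts the minimality of $\deg_xQ'$ built into $Q<Q'$; otherwise $\epsilon_\mu(\hat Q)\le\epsilon_\mu(Q)$, so Proposition \ref{justepour102} yields $\mu_Q(\partial_bg)=\mu(\partial_bg)$, and Corollary \ref{corcadeftt} (using $\mu(g)>\mu_Q(g)$) gives $\epsilon_\mu(g)>\epsilon_\mu(Q)$; an abstract key polynomial of minimal degree $\le\deg_xg$ with $\epsilon_\mu\ge\epsilon_\mu(g)$ then again contradicts $Q<Q'$.

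The second observation is that $\mu(Q')>\mu_Q(Q')$ forces $\sum_{j\in S_Q(Q')}\mathrm{in}_\mu(q'_j)\,\mathrm{in}_\mu(Q)^j=0$ in $\mathrm{gr}_\mu K[x]$ (cancellation among the terms of $\mathrm{In}_Q(Q')$, which all have $\mu$-value $\mu_Q(Q')$). Introducing the graded ring homomorphism $\psi\colon\mathrm{gr}_{\mu_Q}K[x]=G_{<\alpha}[T]\to\mathrm{gr}_\mu K[x]$ which is the identity on $G_{<\alpha}$ and sends $T$ to $\mathrm{in}_\mu Q$ (well defined since $\mu(Q)=\mu_Q(Q)$), this says $\psi(\mathrm{in}_{\mu_Q}Q')=0$; more generally, for any $h\in K[x]$ one has $\psi(\mathrm{in}_{\mu_Q}h)=0$ if and only if $\mu(h)>\mu_Q(h)$.

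For $\mu_Q$-\emph{minimality}: if $Q'\mid_{\mu_Q}f$ then $\mathrm{in}_{\mu_Q}(Q')$ divides $\mathrm{in}_{\mu_Q}(f)$ in $G_{<\alpha}[T]$, so comparing $T$-degrees gives $\delta_Q(f)\ge\delta\ge1$, whence $S_Q(f)\neq\{0\}$; applying $\psi$ to the divisibility and using $\psi(\mathrm{in}_{\mu_Q}Q')=0$ gives $\mu(f)>\mu_Q(f)$. If $\deg_xf<\deg_xQ'$ this violates $(\ast)$, so $\deg_xf\ge\deg_xQ'$. For $\mu_Q$-\emph{irreducibility} I would first record the structural facts. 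That $\delta=s'$: otherwise $g:=Q'-q'_{s'}Q^{s'}$ has $\deg_xg<s'\alpha\le\deg_xQ'$, $\mu_Q(g)=\mu_Q(Q')$ with $\delta_Q(g)=\delta\ge1$, and $\mu(g)>\mu_Q(g)$ (both $\mu(Q')$ and $\mu(q'_{s'}Q^{s'})$ exceed $\mu_Q(g)=\mu_Q(Q')$, the latter since $s'\notin S_Q(Q')$), contradicting $(\ast)$. Granting moreover that $\deg_xQ'=s'\alpha$, i.e. $q'_{s'}=1$, so that $\mathrm{in}_{\mu_Q}(Q')$ is \emph{monic} of degree $s'$ in $T$ over $G_{<\alpha}$: by $(\ast)$ there is no nonzero homogeneous $\Lambda\in G_{<\alpha}[T]$ of $T$-degree $<s'$ with $\psi(\Lambda)=0$ (lift $\Lambda$ to a polynomial $h$ of degree $<s'\alpha\le\deg_xQ'$ with $S_Q(h)\neq\{0\}$ and $\mu(h)>\mu_Q(h)$). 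Since $\psi(\mathrm{in}_{\mu_Q}Q')=0$ with $\deg_T\mathrm{in}_{\mu_Q}(Q')=s'$, a factorization $\mathrm{in}_{\mu_Q}(Q')=AB$ in $L[T]$ with $\deg_TA,\deg_TB\ge1$ would force (after extending $\psi$ to $\mathrm{Frac}(\mathrm{gr}_\mu K[x])$) $\psi(A)\psi(B)=0$, hence say $\psi(A)=0$, and clearing denominators yields a nonzero relation of $T$-degree $\deg_TA<s'$, a contradiction; thus $\mathrm{in}_{\mu_Q}(Q')$ is irreducible in $L[T]$, and being monic in $T$ over $G_{<\alpha}$ it is irreducible in $G_{<\alpha}[T]=\mathrm{gr}_{\mu_Q}K[x]$. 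Hence $Q'$ is $\mu_Q$-irreducible, completing the proof.

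The main obstacle is the claim $\deg_xQ'=s'\alpha$ (equivalently $q'_{s'}=1$): it is exactly what makes $\mathrm{in}_{\mu_Q}(Q')$ monic in $T$, which is needed for the descent of irreducibility from $L[T]$ to $\mathrm{gr}_{\mu_Q}K[x]$ (without it, a degree-zero-in-$T$ factor of $\mathrm{in}_{\mu_Q}(Q')$ in $G_{<\alpha}$ need not lift to a polynomial of degree $<\deg_xQ'$, and the argument stalls). I expect this to follow once more from $(\ast)$: if $q'_{s'}\neq1$ and $\mathrm{in}_{\mu_Q}(q'_{s'})$ were a unit of $G_{<\alpha}$, then the monic polynomial of degree $s'\alpha<\deg_xQ'$ whose initial form is $\mathrm{in}_{\mu_Q}(q'_{s'})^{-1}\mathrm{in}_{\mu_Q}(Q')$ violates $(\ast)$; the delicate remaining point is to exclude the case in which $\mathrm{in}_{\mu_Q}(q'_{s'})$ is a non-unit of $G_{<\alpha}$, which should be reached by applying Proposition \ref{Proposition10.2} to $q'_{s'}$ together with the lower $Q$-expansion coefficients of $Q'$ and invoking $(\ast)$ (or, alternatively, the equivalence in Proposition \ref{succimmnoyau}) one final time.
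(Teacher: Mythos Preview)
Your setup (the map $\psi$ and the device $(\ast)$) coincides with the paper's: the paper states $(\ast)$ as Proposition \ref{succimmnoyau}(2), and your $\psi$ is its $\varphi$. Your $\mu_Q$-minimality argument is the paper's argument verbatim.

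The divergence is in the $\mu_Q$-irreducibility proof, and here you have taken an unnecessarily hard road that you yourself do not finish. You try to show that $\mathrm{in}_{\mu_Q}(Q')$ is irreducible in $L[T]$ and then descend to $G_{<\alpha}[T]$, which forces you to prove it is \emph{monic} in $T$, i.e.\ $q'_{s'}=1$; you correctly identify this as an obstacle and leave it open. The paper bypasses this entirely. Its argument is: show directly that $\mathrm{Ker}(\varphi)$ is the principal ideal generated by $\mathrm{in}_{\mu_Q}(Q')$. Given a homogeneous $\mathrm{in}_{\mu_Q}f\in\mathrm{Ker}(\varphi)$, perform Euclidean division \emph{in $K[x]$} (where $Q'$ is monic, so there is no issue): $f=aQ'+r$ with $\deg_x r<\deg_x Q'$. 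A short case analysis on the $\mu_Q$-values of $aQ'$ and $r$, together with $\varphi(\mathrm{in}_{\mu_Q}Q')=0$, shows that if $r\neq 0$ then $\mu(r)>\mu_Q(r)$, contradicting $(\ast)$; hence $\mathrm{in}_{\mu_Q}(Q')\mid\mathrm{in}_{\mu_Q}f$. Thus $\mathrm{Ker}(\varphi)=(\mathrm{in}_{\mu_Q}Q')$. Since $\mathrm{gr}_\mu K[x]$ is a domain, $\mathrm{Ker}(\varphi)$ is prime, so $\mathrm{in}_{\mu_Q}(Q')$ is a prime (hence irreducible) element of $\mathrm{gr}_{\mu_Q}K[x]$.

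The moral: do the Euclidean division upstairs in $K[x]$, not in $G_{<\alpha}[T]$; then monicity of $\mathrm{in}_{\mu_Q}(Q')$ in $T$ is never needed, and the structural facts $\delta=s'$ and $q'_{s'}=1$ (though true) become irrelevant to the proof.
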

\begin{proof}
{\LARGE{}We have to prove two things:}{\LARGE \par}

{\LARGE{}1. $Q'$ is $\mu_{Q}$-irreducible.}{\LARGE \par}

{\LARGE{}2. $Q'$ is $\mu_{Q}$-minimal.}{\LARGE \par}

{\LARGE{}First we show 1. Let $\alpha=\deg_{x}Q$. By Remarks \ref{UFD}
and \ref{transc}, it is sufficient to show that $\mathrm{in}_{\mu_{Q}}(Q')$
is irreducible in 
\[
G_{<\alpha}[\mathrm{in}_{\mu_{Q}}(Q)]=\mathrm{gr}_{\mu_{Q}}K[x].
\]
Let $\varphi\colon\mathrm{gr}_{\mu_{Q}}K[x]\to\mathrm{gr}_{\mu}K[x]$
be the natural map which sends $\mathrm{in}_{\mu_{Q}}(f)$ to $\mathrm{in}_{\mu}(f)$
for every polynomial $f$. The map $\varphi$ maps $G_{<\alpha}$
isomorphically onto its image in $\mathrm{gr}_{\mu}K[x]$. The map
$\varphi$ is not injective if and only if there exists a polynomial
$f$ such that $\mu_{Q}(f)<\mu(f)$. In view of Proposition \ref{succimmnoyau},
we have this property for $f=Q'$; in particular, $\mathrm{in}_{\mu_{Q}}(Q')\in\mathrm{Ker}(\varphi)$.
We claim that $\mathrm{Ker}(\varphi)$ is a principal prime ideal,
generated by $\mathrm{in}_{\mu_{Q}}(Q')$. Indeed, take any polynomial
$f$ such that $\mathrm{in}_{\mu_{Q}}f\in\mathrm{Ker}(\varphi)$ and
let $\mathrm{in}_{\mu_{Q}}f=\mathrm{in}_{\mu_{Q}}(a)\mathrm{in}_{\mu_{Q}}(Q')+\mathrm{in}_{\mu_{Q}}(r)$
be the Euclidean division of $\mathrm{in}_{\mu_{Q}}f$ by $\mathrm{in}_{\mu_{Q}}(Q')$.
Then, if $\mathrm{in}_{\mu_{Q}}(r)\neq0$ we have $\mathrm{in}_{\mu_{Q}}(r)\in\mathrm{Ker}(\varphi)$
and so $\mu_{Q}(r)<\mu(r)$, which contradicts the minimality of the
degree of $Q'$. Thus $\mathrm{Ker}(\varphi)=\left(\mathrm{in}_{\mu_{Q}}(Q')\right)\mathrm{gr}_{\mu_{Q}}K[x]$.
Since $\mathrm{gr}_{\mu}K[x]$ has no zero divisors, we know that $\mathrm{in}_{\mu_{Q}}(Q')$
is a prime ideal. Thus $\mathrm{in}_{\mu_{Q}}(Q')$ is irreducible
in $\mathrm{gr}_{\mu_{Q}}K[x]$. This completes the proof of 1.}{\LARGE \par}

{\LARGE{}Now we show 2. Assume that $Q'\ |_{\mu_{Q}}\,r$. We want
to show that $\deg_{x}(r)\geq\deg_{x}(Q')$. First, we know that $\mathrm{in}_{\mu_{Q}}(Q')$
divides $\mathrm{in}_{\mu_{Q}}(r)$ in the unique factorisation domain
$L[\mathrm{in}_{\mu_{Q}}Q]\supset\mathrm{G}_{<\alpha}[\mathrm{in}_{\mu_{Q}}Q]=\mathrm{gr}_{\mu_{Q}}K[x]$.
Hence $r\in\mathrm{Ker}(\varphi)$. In other words, $\mu_{Q}(r)<\mu(r)$.
On the other hand, we know that $\mu_{Q}(Q')<\mu(Q')$ and that $Q'$
is of minimal degree for this property in view of Proposition \ref{succimmnoyau}.
By the minimality of $\deg_{x}(Q')$, we get the result. }{\LARGE \par}\end{proof}
\begin{thm}
{\LARGE{}\label{vaquieimpliesabstract} Fix a monic polynomial $Q\in K[x]$.
Let $\mu'$ be a valuation of $K(x)$ such that:}{\LARGE \par}

{\LARGE{}1. For each $f$ of degree strictly less than $\deg(Q)$,
we have $\mu'(f)=\mu(f)$;}{\LARGE \par}

{\LARGE{}2. $\mu'(Q)>\mu(Q)$.}{\LARGE \par}

{\LARGE{}Then $Q$ is an abstract key polynomial for $\mu'$. }{\LARGE \par}\end{thm}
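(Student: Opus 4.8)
The plan is to show directly that $Q$ satisfies the defining property of an abstract key polynomial for $\mu'$: for every $f \in K[x]$ with $\epsilon_{\mu'}(f) \geq \epsilon_{\mu'}(Q)$, we must have $\deg(f) \geq \deg(Q)$. Equivalently, it suffices to prove that every polynomial $f$ with $\deg(f) < \deg(Q)$ satisfies $\epsilon_{\mu'}(f) < \epsilon_{\mu'}(Q)$. So the argument splits into two independent estimates: a \emph{lower bound} on $\epsilon_{\mu'}(Q)$, and an \emph{upper bound} on $\epsilon_{\mu'}(f)$ for $f$ of low degree.

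For the lower bound on $\epsilon_{\mu'}(Q)$, the key observation is that each formal derivative $\partial_b Q$ has degree strictly less than $\deg(Q)$ (for $b \geq 1$), so by hypothesis~1 we have $\mu'(\partial_b Q) = \mu(\partial_b Q)$. Combined with hypothesis~2, $\mu'(Q) > \mu(Q)$, this gives
$$
\epsilon_{\mu'}(Q) = \max_{b \geq 1} \frac{\mu'(Q) - \mu(\partial_b Q)}{b} > \max_{b \geq 1} \frac{\mu(Q) - \mu(\partial_b Q)}{b} = \epsilon_\mu(Q),
$$
where I am using that both maxima are attained (the index set is finite once we restrict to $b \leq \deg Q$). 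So $\epsilon_{\mu'}(Q) > \epsilon_\mu(Q)$, with a genuine strict gap. For the upper bound, let $f$ have $\deg(f) < \deg(Q)$. Then for every $b \geq 1$, both $f$ and $\partial_b f$ have degree strictly less than $\deg(Q)$, so hypothesis~1 gives $\mu'(f) = \mu(f)$ and $\mu'(\partial_b f) = \mu(\partial_b f)$. Therefore $\epsilon_{\mu'}(f) = \epsilon_\mu(f)$.

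It remains to combine these: for $\deg(f) < \deg(Q)$ we must show $\epsilon_\mu(f) < \epsilon_{\mu'}(Q)$. Here I would invoke that $\mu$ itself need not come with an abstract key polynomial structure, so instead I argue via truncation. By Proposition~\ref{Proposition10.2} applied to the single polynomial $f$, there is an abstract key polynomial $Q_0$ for $\mu$ with $\deg(Q_0) \leq \deg(f) < \deg(Q)$ and $\mu_{Q_0}(f) = \mu(f)$; since $Q_0$ is an abstract key polynomial of degree strictly below $\deg(Q)$, the defining property forces $\epsilon_\mu(f) < \epsilon_\mu(Q_0)$ unless $\deg(f) \geq \deg(Q_0)$ — and in fact what I really want is simply that $\epsilon_\mu(f) \leq \epsilon_\mu(Q)$, because $\epsilon_\mu$ is non-increasing as a function on polynomials ordered by the key-polynomial hierarchy. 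Being more careful: the cleanest route is to note that any $f$ with $\deg(f) < \deg(Q)$ has $\epsilon_\mu(f) \leq \epsilon_\mu(Q)$ is \emph{not} automatic, so instead I will use Lemma~\ref{inegalite}-type reasoning directly on $\mu'$ and $Q$. Concretely, writing the $Q$-expansion and applying Lemma~\ref{inegalite} with $\mu'$ in place of $\mu$ (which applies once $Q$ is known to be an abstract key polynomial — circular) suggests instead the following self-contained finish: since $\mu'$ agrees with $\mu$ below degree $\deg(Q)$ and $\mu'(Q) > \mu(Q)$, while $\epsilon_{\mu'}(Q) > \epsilon_\mu(Q) \geq \epsilon_\mu(g) = \epsilon_{\mu'}(g)$ for any $g$ of degree below $\deg(Q)$ that \emph{itself} lies below some key polynomial for $\mu$ — and the general low-degree $f$ reduces to this case by Proposition~\ref{Proposition10.2}, which guarantees $\mu_{Q_0}(f)=\mu(f)$ so that $\epsilon_\mu(f) \leq \epsilon_\mu(Q_0) \leq \epsilon_\mu(Q)$ as $Q_0$ and $Q$ are both abstract key polynomials for $\mu$ with $\deg(Q_0) \le \deg(f) < \deg(Q)$. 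Chaining these, $\epsilon_{\mu'}(f) = \epsilon_\mu(f) \leq \epsilon_\mu(Q) < \epsilon_{\mu'}(Q)$, which is exactly what we need.

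The main obstacle is the last inequality $\epsilon_\mu(f) \leq \epsilon_\mu(Q)$ for low-degree $f$: $Q$ is only assumed monic, \emph{not} an abstract key polynomial for $\mu$, so one cannot directly apply the definition. The resolution is Proposition~\ref{Proposition10.2}, which manufactures an honest abstract key polynomial $Q_0$ for $\mu$ of degree at most $\deg(f)$ with $\mu_{Q_0}(f) = \mu(f)$; then the defining inequality for $Q_0$, together with $\deg(f) < \deg(Q)$, should be leveraged — but one must still compare $\epsilon_\mu(Q_0)$ with $\epsilon_\mu(Q)$, and since $Q$ is not a key polynomial this comparison is not immediate either. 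I expect the cleanest fix is to observe that $\epsilon_\mu(f) \le \epsilon_{\mu'}(Q)$ can be obtained without comparing to $\epsilon_\mu(Q)$ at all: use $\epsilon_{\mu'}(Q) > \epsilon_\mu(Q_0) \ge \epsilon_\mu(f)$, where the first inequality follows because $\mu'(Q) > \mu(Q) \ge \mu_{Q_0}(Q) $ forces a strict increase in the relevant ratio while the derivatives $\partial_b Q$ keep the same value, and $Q_0$ being an abstract key polynomial with $\deg(Q_0) \le \deg(f) < \deg(Q)$ controls everything below degree $\deg(Q)$. Making this chain airtight — in particular verifying that the index $b$ achieving $\epsilon_{\mu'}(Q)$ interacts correctly with the value of $\partial_b Q$ under $\mu = \mu'$ — is the technical heart of the argument.
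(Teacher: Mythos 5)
Your first two estimates are correct and do match the skeleton of what is needed: for $\deg f<\deg Q$ hypothesis 1 gives $\epsilon_{\mu'}(f)=\epsilon_{\mu}(f)$, and since every $\partial_bQ$ with $b\ge 1$ has degree below $\deg Q$, hypothesis 2 gives $\epsilon_{\mu'}(Q)>\epsilon_{\mu}(Q)$. But the proof is not complete: the entire content of the theorem sits in the step you repeatedly defer, namely an upper bound for $\epsilon_{\mu'}(f)$ lying strictly below $\epsilon_{\mu'}(Q)$, and your proposed chain $\epsilon_{\mu'}(Q)>\epsilon_{\mu}(Q_0)\ge\epsilon_{\mu}(f)$ is asserted rather than proved at its crucial link. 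The second inequality is in fact salvageable, but not by the route you indicate: the defining property of $Q_0$ gives nothing once $\deg f\ge\deg Q_0$, whereas Lemma \ref{inegalite} applied to $Q_0$, together with $\mu_{Q_0}(f)=\mu(f)$ and $\mu(\partial_bf)\ge\mu_{Q_0}(\partial_bf)$, yields $\mu(f)-\mu(\partial_bf)\le\mu_{Q_0}(f)-\mu_{Q_0}(\partial_bf)\le b\,\epsilon_{\mu}(Q_0)$ for all $b$, hence $\epsilon_{\mu}(f)\le\epsilon_{\mu}(Q_0)$. The first inequality $\epsilon_{\mu'}(Q)>\epsilon_{\mu}(Q_0)$ is the heart of the matter, and your justification (``$\mu'(Q)>\mu(Q)\ge\mu_{Q_0}(Q)$ forces a strict increase in the relevant ratio while the derivatives keep the same value'') is exactly the claim to be proved, not a proof. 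What is needed is a specific index $b$ with $\frac{\mu_{Q_0}(Q)-\mu_{Q_0}(\partial_bQ)}{b}=\epsilon_{\mu}(Q_0)$ \emph{and} $\mu_{Q_0}(\partial_bQ)=\mu(\partial_bQ)$; only then does $\mu'(Q)>\mu(Q)\ge\mu_{Q_0}(Q)$ give $\epsilon_{\mu'}(Q)\ge\frac{\mu'(Q)-\mu'(\partial_bQ)}{b}>\frac{\mu_{Q_0}(Q)-\mu_{Q_0}(\partial_bQ)}{b}=\epsilon_{\mu}(Q_0)$.

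Two ingredients are missing for that. First, the existence of such a $b$ is Proposition \ref{THEprop}, which you never invoke; it requires $S_{Q_0}(Q)\neq\{0\}$, and this is where hypothesis 2 really enters, via $\mu'_{Q_0}(Q)\le\mu(Q)<\mu'(Q)$ (the $\mu$- and $\mu'$-truncations by $Q_0$ coincide, by hypothesis 1, since $\deg Q_0<\deg Q$). Second --- and this is what your closing sentence about the ``technical heart'' glosses over --- the $b$ produced by Proposition \ref{THEprop} need not satisfy $\mu_{Q_0}(\partial_bQ)=\mu(\partial_bQ)$ if $Q_0$ was chosen only to compute $\mu(f)$ correctly. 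One must apply Proposition \ref{Proposition10.2} to the whole family $\{\partial_bQ\}_{b\ge1}$ (all of degree $<\deg Q$) and then use Proposition \ref{justepour102} to pass to a single auxiliary abstract key polynomial, of maximal $\epsilon$, whose truncation computes the values of all the $\partial_bQ$ simultaneously. This is precisely the mechanism of the paper's proof (run by contradiction, with a minimal-degree counterexample $g$ playing the role of your $Q_0$). As written, your argument stops exactly where that proof begins.
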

\begin{proof}
{\LARGE{}Assume that $Q$ is not an abstract key polynomial for $\mu'$.
Then there exists a monic polynomial $g$ such that 
\begin{equation}
\epsilon_{\mu'}(g)\geq\epsilon_{\mu'}(Q)\label{eq:nonstrictepsilon}
\end{equation}
and 
\begin{equation}
\deg(g)<\deg(Q).\label{eq:strictdegree}
\end{equation}
We can choose $g$ of minimal degree for this property, and hence
$g$ is an abstract key polynomial.}{\LARGE \par}

{\LARGE{}Thus there exists an abstract key polynomial $g$ such that
$\epsilon_{\mu'}(g)\geq\epsilon_{\mu'}(Q)$ and $\deg(g)<\deg(Q)$.}{\LARGE \par}

{\LARGE{}Since every derivative of $Q$ has degree strictly smaller
than $\deg(Q)$, we have 
\[
\deg_{x}g\leq\deg_{x}(\partial_{1}Q)=\max\limits _{b\in\mathbb{N}^{\ast}}\left\{ \deg_{x}(\partial_{b}Q)\right\} .
\]
By Proposition \ref{Proposition10.2}, replacing $g$ by another abstract
key polynomial with larger $\epsilon_{\mu}$, if necessary, we may
assume, in addition, that 
\begin{equation}
\mu'(\partial_{b}Q){=}\mu'_{g}(\partial_{b}Q)\label{eq:partialsofQ}
\end{equation}
for all strictly positive integers $b$ (at this point, the abstract
key polynomial $g$ still satisfies (\ref{eq:nonstrictepsilon}) and
(\ref{eq:strictdegree}) but we may no longer have the condition that
$g$ is of minimal degree for this property).}{\LARGE \par}

{\LARGE{}We claim that for each polynomial $h$, we have 
\begin{equation}
\mu(h){\geq}\mu'_{g}(h).\label{eq:muvsmu'_g}
\end{equation}
Indeed, let $h=\sum\limits _{j=0}^{l}h_{j}g^{j}$ be the $g$-expansion
of $h$. We have 
\[
\mu'_{g}(h)=\mu'_{g}\left(\sum\limits _{j=0}^{l}h_{j}g^{j}\right)=\min\limits _{0\leq j\leq l}\left\{ \mu'(h_{j})+j\mu'(g)\right\} =\min\limits _{0\leq j\leq l}\left\{ \mu(h_{j})+j\mu(g)\right\} 
\]
by hypothesis 1.}{\LARGE \par}

{\LARGE{}Hence $\mu'_{g}(h)=\min\limits _{0\leq j\leq l}\mu(h_{j}g^{j})\leq\mu(h)$.
In particular, $\mu'(Q)>\mu(Q)\geq\mu'_{g}(Q)$.}{\LARGE \par}

{\LARGE{}Recall that if $Q=\sum\limits _{j=0}^{s}Q_{j}g^{j}$ is the
$g$-expansion of $Q$, we denote 
$$S_{g}(Q)=\left\{ j\in\{0,\ldots,s\}\ \left|\ \mu'(Q_{j}g^{j})=\mu'_{g}(Q)\right.\right\}. $$
Suppose $S_{g}(Q)=\{0\}$, then $\mu'(Q)>\mu'_{g}(Q)=\mu'(Q_{0})$.
Hence 
\[
\mu'\left(Q_{0}\right)=\mu'\left(\sum_{j\geq1}Q_{j}g^{j}\right)\geq\min_{j\geq1}\mu'(Q_{j}g^{j})>\mu'(Q_{0}),
\]
which is a contradiction. We have proved that $S_{g}(Q)\neq\{0\}$.}{\LARGE \par}

{\LARGE{}By Proposition \ref{THEprop}, there exists a strictly positive
integer $b$ such that $\frac{\mu'_{g}(Q)-\mu'_{g}(\partial_{b}Q)}{b}=\epsilon_{\mu'}(g)$.
By virtue of (\ref{eq:partialsofQ}) we obtain $\epsilon_{\mu'}(g)=\frac{\mu'_{g}(Q)-\mu'_{g}(\partial_{b}Q)}{b}=\frac{\mu'_{g}(Q)-\mu'(\partial_{b}Q)}{b}$.
Since $\mu'(Q)>\mu'_{g}(Q)$, we have 
\[
\epsilon_{\mu'}(g)<\frac{\mu'(Q)-\mu'(\partial_{b}Q)}{b}=\epsilon_{\mu'}(Q),
\]
which is a contradiciton.}{\LARGE \par}

{\LARGE{}Hence $Q$ is an abstract key polynomial for $\mu'$. }{\LARGE \par}\end{proof}
\begin{prop}
{\LARGE{}\label{existeval} (\cite{V}, $\mathrm{Proposition\ 1.3}$)
Let $Q$ be a Mac Lane \textendash \ Vaqui\'e key polynomial for the
valuation $\mu$. Then there exists a valuation $\mu'$ such that: }{\LARGE \par}\end{prop}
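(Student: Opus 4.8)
The plan is to construct $\mu'$ by hand, as the \emph{augmented} (truncated) valuation built from $Q$. Fix an element $\gamma$, lying in some ordered abelian group containing the value group $\Gamma$ of $\mu$, with $\gamma>\mu(Q)$, and for $f\in K[x]$ with $Q$-expansion $f=\sum_{j=0}^{s}f_{j}Q^{j}$ (so $\deg_{x}f_{j}<\deg_{x}Q$ for all $j$) set
\[
\mu'(f):=\min_{0\le j\le s}\bigl(\mu(f_{j})+j\gamma\bigr),
\]
extended to $K(x)$ by $\mu'(f/g)=\mu'(f)-\mu'(g)$ once $\mu'$ is known to be a valuation on $K[x]$. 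Uniqueness of the $Q$-expansion makes this well defined; the formula gives at once $\mu'(f)=\mu(f)$ whenever $\deg_{x}f<\deg_{x}Q$ (in particular $\mu'|_{K}=\nu$) and $\mu'(Q)=\gamma>\mu(Q)$, which are the asserted properties. The inequality $\mu'(f+g)\ge\min\{\mu'(f),\mu'(g)\}$ is read off term by term, so the whole content of the Proposition is the multiplicativity $\mu'(fg)=\mu'(f)+\mu'(g)$.

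I would reduce multiplicativity exactly as in the proof that $\mu_{Q}$ is a valuation given above for an abstract key polynomial: by subadditivity, distributivity, and replacement of $f,g$ by the sums of the terms in their $Q$-expansions realizing $\mu'(f)$, $\mu'(g)$, the inequality $\mu'(fg)\ge\mu'(f)+\mu'(g)$ is automatic, and it remains to treat monomials $f=f_{i}Q^{i}$, $g=g_{j}Q^{j}$ with $\deg_{x}f_{i},\deg_{x}g_{j}<\deg_{x}Q$ and to check that the lowest-value contribution in the product of two such sums does not cancel. Writing the Euclidean division $f_{i}g_{j}=aQ+b$ with $\deg_{x}a,\deg_{x}b<\deg_{x}Q$, the $Q$-expansion of the product is $f_{i}g_{j}Q^{i+j}=aQ^{i+j+1}+bQ^{i+j}$, and everything comes down to the claim that
\[
\mu(b)=\mu(f_{i}g_{j})=\mu(f_{i})+\mu(g_{j})\quad\text{and}\quad\mu(b)\le\mu(aQ).
\]
Granting this, $\mu'(f_{i}g_{j}Q^{i+j})=\min\{\mu(a)+(i+j+1)\gamma,\ \mu(b)+(i+j)\gamma\}=\mu(b)+(i+j)\gamma$, since $\mu(b)\le\mu(a)+\mu(Q)<\mu(a)+\gamma$, and this equals $\mu'(f_{i}Q^{i})+\mu'(g_{j}Q^{j})$; the non-cancellation of leading terms then follows in the same way, using that $b\ne0$ always (else $f_{i}g_{j}=aQ$ is $\mu$-divisible by $Q$).

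The hard part is the displayed claim, and this is precisely where the hypothesis that $Q$ is a Mac Lane--Vaqui\'e key polynomial for $\mu$ is used; it is the analogue, in the present setting, of Proposition \ref{rem:prod}. Since $f_{i},g_{j}$ are nonzero of degree $<\deg_{x}Q$, $\mu$-minimality of $Q$ gives $Q\nmid_{\mu}f_{i}$ and $Q\nmid_{\mu}g_{j}$, whence $\mu$-irreducibility of $Q$ gives $Q\nmid_{\mu}f_{i}g_{j}$. Now compare $\mu(aQ)$ and $\mu(b)$: if $\mu(aQ)<\mu(b)$ then $\mathrm{in}_{\mu}(f_{i}g_{j})=\mathrm{in}_{\mu}(a)\,\mathrm{in}_{\mu}(Q)$, so $Q\ |_{\mu}\ f_{i}g_{j}$, a contradiction, hence $\mu(b)\le\mu(aQ)$; and if moreover $\mu(f_{i}g_{j})>\mu(b)$ then necessarily $\mu(aQ)=\mu(b)$ and $\mathrm{in}_{\mu}(b)=-\mathrm{in}_{\mu}(a)\,\mathrm{in}_{\mu}(Q)$ is a nonzero multiple of $\mathrm{in}_{\mu}Q$, contradicting $\mu$-minimality since $\deg_{x}b<\deg_{x}Q$; hence $\mu(f_{i}g_{j})=\mu(b)$, which is $\mu(f_{i})+\mu(g_{j})$ because $\mu$ is a valuation. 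This establishes that $\mu'$ is a valuation of $K(x)$ with the required properties. Finally, since $\mu'$ coincides with its own truncation $\mu'_{Q}$ by construction, Theorem \ref{vaquieimpliesabstract} shows $Q$ is an abstract key polynomial for $\mu'$, and Theorem \ref{abstractimpliesVaquie} then recovers that $Q$ is a Mac Lane--Vaqui\'e key polynomial for $\mu'=\mu'_{Q}$, consistently with the results of this section.
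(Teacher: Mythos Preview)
The paper does not prove this Proposition; it is quoted from \cite{V}, Proposition~1.3, and stated without argument. Your proof is correct and is precisely the classical Mac~Lane--Vaqui\'e construction of the \emph{augmented valuation} $\mu'=[\mu;\,\mu'(Q)=\gamma]$, which is how the result is established in \cite{V}. The key step --- that for $f_i,g_j$ of degree $<\deg_x Q$ with Euclidean division $f_ig_j=aQ+b$ one has $\mu(b)=\mu(f_ig_j)\le\mu(aQ)$ --- is deduced from $\mu$-minimality and $\mu$-irreducibility of $Q$ exactly as in the reference, and plays the same role here that Proposition~\ref{rem:prod} plays for abstract key polynomials earlier in this paper. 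Your closing remarks, invoking Theorems~\ref{vaquieimpliesabstract} and~\ref{abstractimpliesVaquie}, go beyond the bare statement of the Proposition but correctly anticipate the Corollary that immediately follows it.
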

\begin{enumerate}
\item {\LARGE{}For each $f$ of degree strictly less than $\deg(Q)$, we
have $\mu'(f)=\mu(f)$ }{\LARGE \par}
\item {\LARGE{}$\mu'(Q)>\mu(Q)$. }{\LARGE \par}\end{enumerate}
\begin{cor}
{\LARGE{}Let $Q$ be a Mac Lane \textendash \ Vaqui\'e key polynomial
for the valuation $\mu$. Then it is an abstract key polynomial for
any valuation $\mu'$ satisfying the conclusion of Proposition \ref{existeval}. }{\LARGE \par}
\end{cor}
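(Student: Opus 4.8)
The plan is to combine Theorem \ref{vaquieimpliesabstract} with Proposition \ref{existeval} essentially mechanically. Given that $Q$ is a Mac Lane--Vaqui\'e key polynomial for $\mu$, Proposition \ref{existeval} produces a valuation $\mu'$ of $K(x)$ satisfying (1) $\mu'(f)=\mu(f)$ for every $f$ of degree strictly less than $\deg(Q)$, and (2) $\mu'(Q)>\mu(Q)$. These are precisely hypotheses 1 and 2 of Theorem \ref{vaquieimpliesabstract}, so that theorem immediately yields that $Q$ is an abstract key polynomial for $\mu'$.

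First I would invoke Proposition \ref{existeval} to obtain the valuation $\mu'$ with properties (1) and (2). Then I would observe that these two properties are exactly the hypotheses of Theorem \ref{vaquieimpliesabstract} (with the roles of $\mu$ and $\mu'$ matched up correctly). Applying that theorem, I conclude that $Q$ is an abstract key polynomial for $\mu'$. Finally, I would note that the statement of the corollary is precisely this conclusion for an arbitrary $\mu'$ satisfying the conclusion of Proposition \ref{existeval}, so the proof is complete.

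There is essentially no obstacle here: the corollary is a formal consequence of the two cited results, so the only thing to be careful about is matching the notation. In particular one should check that Theorem \ref{vaquieimpliesabstract} really does take as input an \emph{arbitrary} valuation $\mu'$ satisfying conditions 1 and 2 relative to $\mu$, rather than one constructed in some specific way; inspection of its statement confirms that this is the case. Thus the proof is a two-line deduction.

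\begin{proof}
By Proposition \ref{existeval}, there exists a valuation $\mu'$ of $K(x)$ such that $\mu'(f)=\mu(f)$ for every $f\in K[x]$ of degree strictly less than $\deg(Q)$ and $\mu'(Q)>\mu(Q)$; moreover any valuation satisfying the conclusion of Proposition \ref{existeval} has exactly these two properties. These are precisely hypotheses 1 and 2 of Theorem \ref{vaquieimpliesabstract}. Hence, by Theorem \ref{vaquieimpliesabstract}, $Q$ is an abstract key polynomial for $\mu'$.
\end{proof}
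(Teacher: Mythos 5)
Your proof is correct and is exactly the intended argument: the paper leaves this corollary without an explicit proof precisely because it is the immediate combination of Proposition \ref{existeval} (whose conclusion supplies hypotheses 1 and 2) with Theorem \ref{vaquieimpliesabstract}. Nothing further is needed.
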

{\LARGE{}\bibliographystyle{amsplain} }{\LARGE \par}

\end{document}